\newcommand{\mmi}{\mathfrak i }
\newcommand{\mr}{\mathfrak r }
\newcommand{\mg}{\mathfrak g }
\newcommand{\ms}{\mathfrak s }
\newcommand{\mmu}{\mathfrak u }
\newcommand{\mk}{\mathfrak k }
\newcommand{\mh}{\mathfrak h }
\newcommand{\mmp}{\mathfrak p }
\newcommand{\so}{\mathfrak{so} }
\newcommand{\su}{\mathfrak{su} }
\renewcommand{\sl}{\mathfrak{sl} }
\newcommand{\gl}{\mathfrak{gl} }
\newcommand{\lela}{ g(}
\newcommand{\rira}{)}
\newcommand{\mcZ}{\mathcal Z}
\newcommand{\bs}{\backslash}
\newcommand{\Ric}{\operatorname{Ric}}
\newcommand{\GL}{\operatorname{GL}}
\newcommand{\SL}{\operatorname{SL}}
\newcommand{\SU}{\operatorname{SU}}
\newcommand{\R}{\mathbb R}
  \renewcommand{\H}{\mathbb H}
\newcommand{\Q}{\mathbb Q}
\newcommand{\N}{\mathbb N}
\newcommand{\Z}{\mathbb Z}
\newcommand{\ts}{\theta^\sharp}
\DeclareMathOperator{\ad}{ad}
\DeclareMathOperator{\tr}{tr}
\DeclareMathOperator{\diag}{diag}
\numberwithin{equation}{section}
 \newtheorem{teo}{Theorem}[section]
 \newtheorem{pro}[teo]{Proposition}
 \newtheorem{cor}[teo]{Corollary}
 \newtheorem{lm}[teo]{Lemma}
 \newtheorem{defi}[teo]{Definition}
 \theoremstyle{definition}
 \newtheorem{ex}[teo]{Example}
 \newtheorem{remark}[teo]{Remark}
\newcommand{\nc}{\newcommand}
\nc{\Iso}{\operatorname{Iso}}
\nc{\Id}{\operatorname{Id}}
 \nc{\iso}{\mathfrak{iso}}
 \nc{\sso}{\mathfrak{so}}
\nc{\Ad}{\operatorname{Ad}} 
\nc{\Sym}{\mathrm{Sym}}
 \nc{\pr}{\operatorname{pr}} 
 \nc{\Dera}{\operatorname{Dera}} 
 \nc{\Auto}{\operatorname{Auto}}
 \nc{\noi}{\noindent}
 \nc{\SO}{\operatorname{\rm SO}}
\title{The structure of locally conformally product Lie algebras}
\author{Viviana del Barco}
\address{V.~del Barco: Instituto de Matemática, Estatística e Computação Científica, Universidade Estadual de Campinas,  Rua Sergio Buarque de Holanda, 651, Cidade Universitaria Zeferino Vaz, 13083-859, Campinas, São Paulo, Brazil.}
\email{delbarc@ime.unicamp.br}
\author{Andrei Moroianu}
\address{A.~Moroianu: Université Paris-Saclay, CNRS,  Laboratoire de mathématiques d'Orsay, 91405, Orsay, France, and Institute of Mathematics “Simion Stoilow” of the Romanian Academy, 21 Calea Grivitei, 010702 Bucharest, Romania}
\email{andrei.moroianu@math.cnrs.fr}
\subjclass[2010]{22E25, 53C18, 22E40, 53C30, 53C29} 
\keywords{Conformal geometry, Weyl connections, LCP structures, Unimodular Lie groups, Lattices} 
\begin{document}

\begin{abstract} A locally conformally product (LCP) structure on a compact conformal manifold is a closed non-exact Weyl connection (i.e.~a linear connection which is locally but not globally the Levi-Civita connection of Riemannian metrics in the conformal class), with reducible holonomy. 
A left-invariant LCP structure on a compact quotient $\Gamma\backslash G$ of a simply connected Riemannian Lie group $(G,g)$ with Lie algebra $\mg$ can be characterized in terms of a closed 1-form $\theta\in\mg^*$ and a non-zero subspace $\mmu\subset \mg$ satisfying some algebraic conditions.  We show that these conditions are equivalent to the fact that $\mg$ is isomorphic to a semidirect product of a non-unimodular Lie algebra acting on an abelian one by a conformal representation. This extends to the general case results from \cite{AdBM24} holding for solvmanifolds. In addition, we construct explicit examples of compact LCP manifolds which are not solvmanifolds.
\end{abstract}

\maketitle

\section{Introduction}

A locally conformally product (LCP) structure on a compact  manifold $M$ is a pair $(c,D)$ where $c$ is a conformal class of Riemannian metrics on $M$ and $D$ is a closed, non-exact Weyl connection with reducible holonomy. 

In \cite{BM2016} it was conjectured that every such connection has to be flat. The conjecture was proved in loc.~cit.~under a further assumption (tameness of $D$). However, soon after, a counterexample was constructed by Matveev and Nicolayevsky \cite{MN2015}. In some sense, this can be considered as the birth certificate of LCP structures. 

The universal cover $\tilde M$ of any LCP manifold $(M,c,D)$ is simply connected, so the lift of $D$ to $M$ is globally the Levi-Civita of a reducible Riemannian metric $h_D$ on $\tilde M$ (uniquely defined up to constant rescaling). However, $h_D$ is incomplete (since there are elements of the fundamental group of $M$ acting by contractions with respect to $h_D$ and without fixed points), so the global de Rham decomposition theorem does not apply. 

The structure of non-flat LCP manifolds (or more precisely of their universal cover) was described in \cite{MN2017} in the analytic setting and by Kourganoff \cite{Kourganoff} in the general (smooth) setting. Their main result can be stated as follows (see \cite[Theorem 1.5]{Kourganoff}): 
{\em The universal cover $(\tilde M,h_D)$ of a compact non-flat LCP manifold $(M,c,D)$ is globally isometric to a Riemannian product $\mathbb{R}^q\times (N, g_N)$, where $\mathbb{R}^q$ ($q\ge 1$) is the flat Euclidean space, and $(N, g_N)$ is an incomplete Riemannian manifold with irreducible holonomy.}

The distribution tangent to $\R^q$ is preserved by the action of the fundamental group on $\tilde M$, so it descends to a rank $q$ distribution on $M$ called the {\em flat distribution}. A Riemannian metric $g\in c$ on $M$ is called {\em adapted} if the Lee form of $D$ with respect to $g$ vanishes on the flat distribution. The relevance of this notion is due to the following observation of Flamencourt \cite[Def. 3.9]{Fl24}: If $g$ is and adapted metric on a compact LCP manifold $(M,c,D)$ and $(M',g')$ is any compact Riemannian manifold, then the Riemannian product $(M,g)\times (M',g')$ also carries an LCP structure. Notice that every LCP manifold carries adapted metrics \cite[Prop. 3.6]{Fl24}, \cite[Thm. 4.4]{MP24}.

In \cite[Prop. 4.4 and Cor. 4.6]{Fl24} Flamencourt constructed large families of examples of LCP manifolds, including in particular the class of OT manifolds introduced by Oeljeklaus and Toma \cite{OT2005}. He also showed \cite[Prop. 3.11]{Fl24} that the homothety factors of the action of the fundamental group of $M$ on the metric $h_D$ are algebraic numbers, a fact which indicates the strong relationship between LCP manifolds and number field theory.

It was noticed by Kasuya \cite{Ka13} that every OT manifold is a solvmanifold. It is thus natural to study left-invariant LCP structures on solvmanifolds. This was done in \cite{AdBM24} by the authors in collaboration with A.~Andrada. First, in \cite[Prop. 2.4]{AdBM24} it is shown that if $M=\Gamma\bs G$ is a compact manifold obtained as the left quotient of a simply connected Lie group $G$ by a co-compact lattice $\Gamma$, then left-invariant LCP structures on $M$ are in one-to-one correspondence with triples $(g,\theta,\mmu)$ where $g$ is a positive definite scalar product on the Lie algebra $\mg$ of $G$, $\theta\in\mg^*$ is a non-zero linear form on $\mg$ vanishing on the derived algebra $\mg'$, and $\mmu\subset \mg$ is a non-zero $\nabla^\theta$-flat subspace (see Definition \ref{defi} for the precise meaning of this notion). Such a triple $(g,\theta,\mmu)$ is called an LCP structure on the Lie algebra $\mg$. An LCP structure is called {\em proper} if $\mmu$ is a proper subspace of $\mg$, and {\em conformally flat} if $\mmu=\mg$. 

Recall that a necessary condition for a Lie group to admit co-compact lattices is that its Lie algebra is unimodular \cite{Mil76}. Thus, in order to understand left-invariant LCP structures on compact quotients of Lie groups, one has to study LCP structures on unimodular Lie algebras. 

In the solvable case, this was done in \cite[Prop. 4.3 and Cor. 5.5]{AdBM24}, where it is shown that every unimodular solvable LCP Lie algebra is obtained as a semi-direct product $\mh\ltimes_\alpha\R^q$ of a non-unimodular solvable Lie algebra $\mh$ acting on $\R^q$ via a conformal representation $\alpha:\mh\to\so(q)\oplus\R\Id_q$,
whose diagonal component is equal, up to the constant coefficient $-\frac1q$, to the trace form of $\mh$. 

The main goal of the present paper is to extend this result to general unimodular Lie algebras. Along the way, we will also obtain the classification of conformally flat LCP structures (i.e. for which the $\nabla^\theta$-flat subspace $\mmu$ is equal to $\mg$). In the unimodular case, they are all isomorphic to $\R$, $\R^2$ or $\su(2)\oplus \R$ (Prop. \ref{pro:cflatunim} below) and in general every conformally flat LCP Lie algebra is the semidirect product of a unimodular one and an abelian Lie algebra (details are given in Thm. \ref{teo:krn} below). 

In Theorem \ref{teo:unim} we show that every proper LCP structure on a unimodular Lie algebra is adapted. The assumption that the LCP structure is adapted is necessary for most of the other results of Section 4, but the previous theorem just says that this holds automatically when the Lie algebra is unimodular.
Our main result is Theorem \ref{teo:uunimadapt} (summarized in Corollary \ref{cor:main}). It states that every Lie algebra carrying an adapted proper LCP structure is obtained as a semi-direct product, like in the solvable case. 

In Section \ref{sec:nonsolv} we give examples of non-solvable simply connected Lie groups admitting co-compact lattices, whose Lie algebras carry LCP structures. The construction is easier when the semi-simple part of the group is of compact type, whereas for non-compact type, some results about division algebras over $\mathbb{Q}$ are needed.

In the last section, we investigate the set of closed 1-forms on a given unimodular Lie algebra which can occur as Lee forms of proper LCP structures and show that this set is finite (Theorem \ref{teo:lee}). In contrast, Proposition \ref{pro:lee} shows that there are infinitely many closed 1-forms occurring as Lee forms of conformally flat LCP structures on each unimodular Lie algebra carrying conformally flat LCP structures.

\smallskip 

{\bf Acknowledgements:} The authors thank Adrián Andrada for useful discussions and Yves Benoist who pointed out the existence of special co-compact lattices in $\SL(d,\R)$ used in Subsection \ref{ex:sld}. V.~dB is grateful to the Laboratoire de Math\'ematiques d'Orsay and to the Université Paris-Saclay for hospitality and partial financial support. V.~dB is partially supported by FAPESP grant 2023/15089-9. A.~M. is partly supported by the PNRR-III-C9-2023-I8 grant CF 149/31.07.2023 {\em Conformal Aspects of Geometry and Dynamics}.
\medskip 

\section{Preliminaries}

Let $G$ be a connected Lie group endowed with a left-invariant metric $g$, and let $\mg$ denote its Lie algebra. Then $g$ defines an inner product on $\mg$, which we will also denote by $g$.

It is well known that left-invariant tensors on $G$ are in one-to-one correspondence with algebraic tensors in $\mg$. In particular, left-invariant differential $k$-forms on $G$ correspond to elements in $\Lambda^k\mg^*$. A $k$-form $\alpha$ is closed (exact) on $G$ if and only if the corresponding element $\alpha\in \Lambda^k\mg^*$ is closed (resp.~exact) with respect to the Chevalley-Eilenberg differential of $\mg$.
In particular, by using Maurer-Cartan equation, $\theta\in\mg^*$ is closed if and only $\theta|_{\mg'}=0$, where $\mg'$ is the derived algebra of $\mg$. 

The trace form $H^\mg$ of a Lie algebra $\mg$ is the 1-form defined as
\begin{equation}
    H^\mg(x)=\tr(\ad_x), \qquad \forall x\in\mg.
\end{equation}
Since $H^\mg([x,y])=\tr\ad_{[x,y]}=0$ for all $x,y\in \mg$, we obtain that $H^\mg$ is closed. By definition, $\mg$ is unimodular if and only if $H^\mg=0$.

The Levi-Civita connection of the Riemannian manifold $(G,g)$ can be viewed as a linear map $\nabla^g:\mg\to\so(\mg)$ which, by Koszul's formula, satisfies
\begin{equation}
\label{eq:lc}\lela\nabla^g_xy,z\rira=\frac12\left(\lela  [x,y],z\rira-\lela[x,z],y\rira-\lela[y,z],x)\right),\qquad \forall\; x,y,z\in \mg.
\end{equation} Here and henceforth $\so(\mg)$ denotes the space of skew-symmetric endomorphisms of $(\mg,g)$.

Consider the conformal class $c$ of the Riemannian metric $g$ on $G$. A Weyl connection on the conformal manifold $(G,c)$ is a torsion-free linear connection $D$ preserving the conformal class $c$. The fundamental theorem of conformal geometry guarantees a one-to-one correspondence between Weyl connections $D$ and 1-forms $\theta$ \cite{We23}; under this correspondence, $\theta$ is called the Lee form of $D$ with respect to $g$. The Weyl structure is called closed (exact) if its Lee form is closed (resp.~exact); one can easily show that these conditions are independent of the fixed metric in the conformal class. We say that a Weyl structure is left-invariant if its Lee form $\theta$ with respect to the left-invariant metric $g$ is itself left-invariant on $G$; in this case, we also denote its value at the identity by $\theta\in \mg^*$.

The conformal analogue of the Koszul formula allows us to write every left-invariant Weyl connection $D$ in terms of its Lee form $\theta\in\mg^*$ and of the metric $g$ as $D=\nabla^\theta$ where 
\begin{equation}
\label{eq:weylc}
\nabla^\theta_xy:=\nabla^g_xy+\theta(x)y+\theta(y)x-g(x,y)\ts\qquad \forall\; x,y\in \mg, 
\end{equation}
and $\nabla^g$ is defined by \eqref{eq:lc}.
Here and at several other places throughout the paper, 
$\ts\in \mg$ denotes the $g$-dual vector of $\theta$. Using \eqref{eq:lc} together with \eqref{eq:weylc}, the Weyl connection $\nabla^\theta$ can be explicitly defined by the following formula:
    \begin{multline}
\label{eq:LCP}
\lela\nabla^\theta_xy,z\rira=\frac12\left(\lela  [x,y],z\rira-\lela[x,z],y\rira-\lela[y,z],x)\right)\\
+\theta(x)\lela y,z\rira+\theta(y)\lela x,z\rira-\theta(z)g(x,y),\qquad \forall\; x,y,z\in \mg.
\end{multline}

Notice that $\nabla^\theta$ can be seen as linear map $\nabla^\theta:\mg\otimes\mg\to \mg$ so that, for each $x\in\mg$, $\nabla_x^\theta:\mg\to\mg$ is an endomorphism of $\mg$.

Given $x\in \mg$, let  $\theta\wedge x$ denote the 
skew-symmetric endomorphism of $\mg$ defined by $(\theta\wedge x)(y):=\theta(y)x-g(x,y)\ts$, for all $y\in\mg$. Then, \eqref{eq:weylc} can also be written
\begin{equation}
\label{eq:weylc1}
\nabla^\theta_x=\nabla^g_x+\theta\wedge x+\theta(x)\Id_\mg\qquad\forall\; x\in \mg,
\end{equation}
showing that 
\begin{equation}
\label{eq:weylc2}
\nabla^\theta_x-\theta(x)\Id_\mg\in\so(\mg)\qquad\forall\; x\in \mg.
\end{equation}

We are now ready to introduce the objects of study of this paper. 

\begin{defi} \label{defi} A locally conformally product (LCP for short) Lie algebra is a quadruple $(\mg,g,\theta,\mmu)$ where $(\mg,g)$ is a metric Lie algebra, $\theta$ is a non-zero closed $1$-form on $\mg^*$, and $\mmu$ is a non-zero $\nabla^\theta$-flat subspace of $(\mg,g)$. The LCP structure $(g,\theta,\mmu)$ is called  {\em conformally flat} if $\mmu=\mg$ and proper if $\mmu\subsetneq \mg$.
\end{defi}

This definition is motivated by \cite[Prop. 2.4]{AdBM24}, where it is shown that if $M=\Gamma\bs G$ is a compact manifold obtained as the left quotient of a simply connected Lie group $G$ by a co-compact lattice $\Gamma$, then non-flat left-invariant LCP structures on $M$ are in one-to-one correspondence with proper LCP structures on the Lie algebra $\mg$.

For the reader's convenience, we recall two basic results regarding LCP structures on Lie algebras that will be repeatedly used along this manuscript. Their proofs can be found in \cite{AdBM24}.

\begin{pro}[{\cite[Prop. 3.2]{AdBM24}}]\label{pro:algLCP} 
Let $(\mg,g)$ be a metric Lie algebra, $\theta\in\mg^*$ a non-zero closed $1$-form, and $\mmu\subset\mg$ a vector subspace.  Then $\mmu$ is $\nabla^\theta$-parallel if and only if the following two conditions hold:
\begin{enumerate}
\item $\mmu$ and $\mmu^\perp$ are Lie subalgebras;
\item for every $u\in\mmu$ and $x\in\mmu^\perp$, 
\begin{equation}\label{eq:xu}g([u,x],x)=\theta(u)|x|^2\qquad\hbox{and}\qquad g([x,u],u)=\theta(x)|u|^2.
\end{equation}
\end{enumerate}
Moreover, $(\mg,g,\theta,\mmu)$ is an LCP Lie algebra if and only if in addition to $(1)$ and $(2)$, $\mmu\neq 0$ and  the following condition holds:
\begin{enumerate}
\item[(3)] the map $\nabla^\theta:\mg\to \gl(\mmu)$, defined by $x\mapsto \nabla_x^\theta|_\mmu$, is a Lie algebra representation.
\end{enumerate}
\end{pro}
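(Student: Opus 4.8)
The plan is to first translate the parallelism condition into a single scalar identity and then to separate it into symmetric and antisymmetric pieces. Since $\mg=\mmu\oplus\mmu^\perp$, the subspace $\mmu$ is $\nabla^\theta$-parallel if and only if $g(\nabla^\theta_x u,w)=0$ for all $x\in\mg$, $u\in\mmu$ and $w\in\mmu^\perp$. Expanding this with \eqref{eq:LCP} and using $g(u,w)=0$, the term $\theta(x)g(u,w)$ drops out, and writing $x=v+y$ with $v\in\mmu$, $y\in\mmu^\perp$ and using linearity in $x$, the condition splits into two independent families: family (I) obtained by taking $x=v\in\mmu$, whose surviving terms involve $g([v,u],w)$, $g([v,w],u)$, $g([u,w],v)$ and $\theta(w)g(v,u)$; and family (II) obtained by taking $x=y\in\mmu^\perp$, whose terms involve $g([y,u],w)$, $g([y,w],u)$, $g([u,w],y)$ and $\theta(u)g(y,w)$.

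The key step is to decompose (I) as a function of $(v,u)\in\mmu\times\mmu$, and (II) as a function of $(y,w)\in\mmu^\perp\times\mmu^\perp$, into parts symmetric and antisymmetric under the exchange of the two like-typed arguments. In (I) the only antisymmetric term is $g([v,u],w)$, so its vanishing forces $g([v,u],w)=0$ for all $w\in\mmu^\perp$, that is $[v,u]\in\mmu$; symmetrically, the antisymmetric part of (II) forces $[y,w]\in\mmu^\perp$. These are exactly the two subalgebra conditions in $(1)$. The symmetric parts of (I) and (II) are genuine symmetric bilinear forms in $(v,u)$, resp.\ $(y,w)$, and since over $\R$ a symmetric bilinear form is determined by its diagonal, each is equivalent to its restriction to $v=u$, resp.\ $y=w$. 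Performing this restriction and matching signs, the symmetric part of (I) becomes $g([w,v],v)=\theta(w)|v|^2$ and that of (II) becomes $g([u,y],y)=\theta(u)|y|^2$, which are precisely the two identities in $(2)$. Reading these equivalences in both directions shows that $\mmu$ is $\nabla^\theta$-parallel if and only if $(1)$ and $(2)$ hold, which is the first assertion.

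For the final claim, recall that $\mmu$ being $\nabla^\theta$-flat means that it is $\nabla^\theta$-parallel and that the connection it induces is flat. Parallelism is $(1)+(2)$ by the first part, and it is exactly what guarantees that $x\mapsto\nabla^\theta_x|_{\mmu}$ takes values in $\gl(\mmu)$, so that condition $(3)$ is even meaningful. The curvature of the left-invariant connection $\nabla^\theta$ is given on left-invariant fields by $R(x,y)=[\nabla^\theta_x,\nabla^\theta_y]-\nabla^\theta_{[x,y]}$, so its vanishing on $\mmu$ reads $[\nabla^\theta_x|_{\mmu},\nabla^\theta_y|_{\mmu}]=\nabla^\theta_{[x,y]}|_{\mmu}$ for all $x,y\in\mg$, which is precisely the statement that $x\mapsto\nabla^\theta_x|_{\mmu}$ is a Lie algebra homomorphism, i.e.\ condition $(3)$. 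Together with the standing hypotheses $\theta\neq 0$ and the assumption $\mmu\neq 0$, this yields the characterization of LCP Lie algebras.

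I expect the main obstacle to be the bookkeeping in the symmetric/antisymmetric splitting: one must correctly determine which of the four terms in each of (I) and (II) is symmetric and which is antisymmetric under the relevant exchange, and then track the signs carefully so that the diagonals of the symmetric parts reproduce $(2)$ exactly. A minor but necessary check is that these symmetric parts really are symmetric bilinear forms in the two like-typed arguments, so that polarization over $\R$ applies; this follows because each term is bilinear in those arguments and the full symmetric combination is manifestly invariant under their exchange.
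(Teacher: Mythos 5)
Your proof is correct: the expansion of $g(\nabla^\theta_x u,w)$ via \eqref{eq:LCP}, the split into $x\in\mmu$ and $x\in\mmu^\perp$, the symmetric/antisymmetric decomposition with polarization over $\R$ (the signs all check out, e.g.\ the diagonal of the symmetric part of (I) gives $g([x,u],u)=\theta(x)|u|^2$ and that of (II) gives $g([u,x],x)=\theta(u)|x|^2$), and the identification of condition (3) with the vanishing of $R(x,y)|_\mmu=[\nabla^\theta_x,\nabla^\theta_y]|_\mmu-\nabla^\theta_{[x,y]}|_\mmu$, which is legitimate because parallelism guarantees $\nabla^\theta_x|_\mmu\in\gl(\mmu)$, are all sound. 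The paper itself gives no proof, citing \cite[Prop.~3.2]{AdBM24} instead, and your argument is essentially the standard one used there, so no further comparison is needed.
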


\begin{cor}[{\cite[Cor. 3.3]{AdBM24}}] \label{cor:3.4}
Let $(\mg,g,\theta,\mmu)$ be an LCP  Lie algebra and denote by $q$ and $n$ the dimensions of $\mmu$ and $\mg$ respectively. If $\mg$ is unimodular, the trace forms of $\mmu$ and $\mmu^\bot$ are related to $\theta$ by the relations
    \begin{equation}\label{eq:trace}
        H^\mmu=-(n-q) \,\theta|_\mmu,\qquad H^{\mmu^\bot}=-q\,\theta|_{\mmu^\bot}.
    \end{equation}
\end{cor}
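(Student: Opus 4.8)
The plan is to compute the full trace $\tr(\ad_w)$ on $\mg$ for $w$ lying in each of the two subalgebras $\mmu$ and $\mmu^\perp$, exploit unimodularity to set this trace to zero, and then split it according to the orthogonal decomposition $\mg=\mmu\oplus\mmu^\perp$. Recall that by Proposition \ref{pro:algLCP}(1) both $\mmu$ and $\mmu^\perp$ are Lie subalgebras, so their trace forms $H^\mmu$ and $H^{\mmu^\perp}$ are well defined, and that by unimodularity $H^\mg(w)=\tr(\ad_w)=0$ for every $w\in\mg$.

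Concretely, I would first fix an orthonormal basis $\{u_1,\dots,u_q\}$ of $\mmu$ and an orthonormal basis $\{x_1,\dots,x_{n-q}\}$ of $\mmu^\perp$; since $\mmu\perp\mmu^\perp$, their union is an orthonormal basis of $\mg$. For a fixed $u\in\mmu$, evaluating the trace of $\ad_u$ in this basis gives
\[
0=H^\mg(u)=\tr(\ad_u)=\sum_{i=1}^{q} g([u,u_i],u_i)+\sum_{j=1}^{n-q} g([u,x_j],x_j).
\]
In the first sum, the subalgebra property $[u,u_i]\in\mmu$ shows that $g([u,u_i],u_i)$ is exactly the $i$-th diagonal entry of the adjoint representation $\ad^\mmu_u$ of $\mmu$, so the sum equals $H^\mmu(u)$. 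In the second sum, the first equation of Proposition \ref{pro:algLCP}(2) yields $g([u,x_j],x_j)=\theta(u)|x_j|^2=\theta(u)$, whence the sum equals $(n-q)\theta(u)$. Combining the two contributions gives $H^\mmu(u)=-(n-q)\theta(u)$, which is the first identity of \eqref{eq:trace}.

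The second identity follows verbatim with the roles of $\mmu$ and $\mmu^\perp$ interchanged: for $x\in\mmu^\perp$, the block $\sum_j g([x,x_j],x_j)$ equals $H^{\mmu^\perp}(x)$ because $\mmu^\perp$ is a subalgebra, while the block $\sum_i g([x,u_i],u_i)$ equals $q\,\theta(x)$ by the second equation of Proposition \ref{pro:algLCP}(2); unimodularity then gives $H^{\mmu^\perp}(x)=-q\,\theta(x)$.

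I do not expect a genuine obstacle here: the computation is short, and the only points requiring care are bookkeeping ones. The first is recognizing that the diagonal block of $\ad_w$ relative to the subalgebra containing $w$ is literally the adjoint representation of that subalgebra — this is precisely where the subalgebra property from Proposition \ref{pro:algLCP}(1) enters. The second is applying the correct one of the two equations in Proposition \ref{pro:algLCP}(2) to the off-diagonal block, the distinction being which of $\mmu$, $\mmu^\perp$ the summation index ranges over.
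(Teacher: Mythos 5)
Your proof is correct: the blockwise computation of $\tr(\ad_w)$ in an orthonormal basis adapted to $\mg=\mmu\oplus\mmu^\perp$, using the subalgebra property from Proposition \ref{pro:algLCP}(1) for the diagonal block and the appropriate equation of \eqref{eq:xu} for the off-diagonal block, is exactly the standard argument. The paper itself states this corollary without proof, deferring to \cite[Cor. 3.3]{AdBM24}, where the proof proceeds along the same lines as yours, so there is nothing to flag.
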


\section{Conformally flat LCP structures}

In this section we obtain the classification of Lie algebras carrying conformally flat LCP structures, as well as the complete description of the underlying metrics and Lee forms.

It is worth noticing that every conformally flat LCP structure has vanishing Weyl tensor, and the classification of Lie algebras with vanishing Weyl tensor was given by Maier in \cite{Maier98}. However, is not possible to easily identify the conformally flat LCP Lie algebras therein. For this reason, we provide the classification of Lie algebras carrying conformally flat LCP structures in Theorem \ref{teo:krn} below, using different methods. 

For the reader's convenience, we point out that this section is independent from the following ones (except for a technical result), so it might be skipped at a first reading.

Along this section, a conformally flat LCP structure $(g,\theta,\mmu)$ on a Lie algebra $\mg$ will be denoted simply by $(g,\theta)$, since $\mmu=\mg$.
We start by providing a method to construct new conformally flat LCP structures from a given one. 

\begin{pro}\label{ex:sem} 
Let $(g_\mk,\theta_\mk)$ be a conformally flat LCP structure on a Lie algebra $\mk$ and let  $\beta:\mk\to \so(n)$ with $n\geq 0$ be any orthogonal representation.

Consider the  semidirect product $\mg=\mk\ltimes_\rho\R^n$, where  $\rho:\mk\to \gl(n)$ is the representation defined by $\rho(x):=\theta_\mk(x)\Id_n+\beta(x)$ for all $x\in \mk$.

Then the pair $(g,\theta)$, where $g=g_\mk+\langle\cdot,\cdot\rangle_{\R^n}$
and $\theta$ is the 1-form obtained by the extension of $\theta_\mk$ by zero to $\R^n$, defines a conformally flat LCP structure on $\mg$. 
\end{pro}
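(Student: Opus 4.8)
The plan is to verify the conditions of Proposition \ref{pro:algLCP} for the subspace $\mmu=\mg$, since by Definition \ref{defi} a conformally flat LCP structure is precisely an LCP structure with $\mmu=\mg$. In that case $\mmu^\perp=\{0\}$, so conditions (1) and (2) are vacuous, and the only substantial requirement is condition (3): that $x\mapsto\nabla^\theta_x$ be a Lie algebra representation of $\mg$ on itself, equivalently that $\nabla^\theta$ be flat. Before addressing this I would record two elementary facts. First, $\theta\neq 0$ because $\theta_\mk\neq 0$, and $\theta$ is closed since it vanishes on $\R^n$ by construction and on $\mk'=[\mk,\mk]$ because $\theta_\mk$ is closed, hence $\theta|_{\mg'}=0$. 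Second, the same identity $\theta_\mk([a,b])=0$ shows that $\rho$ is genuinely a Lie algebra representation: indeed $[\rho(a),\rho(b)]=[\beta(a),\beta(b)]=\beta([a,b])$, while $\rho([a,b])=\theta_\mk([a,b])\Id_n+\beta([a,b])$, and these agree exactly because $\theta_\mk$ is closed. This guarantees that $\mg=\mk\ltimes_\rho\R^n$ is well defined, and is the point where closedness of $\theta_\mk$ enters.

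Next I would compute the Weyl connection $\nabla^\theta$ of $(\mg,g,\theta)$ directly from \eqref{eq:LCP}, splitting the arguments along the orthogonal decomposition $\mg=\mk\oplus\R^n$. Writing $a,b\in\mk$ and $u,v\in\R^n$, and using repeatedly that $\mk\perp\R^n$, that $\R^n$ is abelian, that $\theta$ vanishes on $\R^n$, and that each $\beta(a)\in\so(n)$ is skew-symmetric, a case-by-case evaluation gives
\[
\nabla^\theta_a b=\nabla^{\theta_\mk}_a b\in\mk,\qquad \nabla^\theta_a u=\rho(a)u\in\R^n,\qquad \nabla^\theta_u a=0,\qquad \nabla^\theta_u v=0.
\]
In other words, for $a\in\mk$ the endomorphism $\nabla^\theta_a$ is block-diagonal for the splitting $\mk\oplus\R^n$, acting as $\nabla^{\theta_\mk}_a$ on $\mk$ and as $\rho(a)$ on $\R^n$, while $\nabla^\theta_u=0$ for every $u\in\R^n$. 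The skew-symmetry of $\beta(a)$ is precisely what makes the relevant cross terms cancel, forcing $\nabla^\theta_u=0$ and yielding $\nabla^\theta_a u=\rho(a)u$.

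Finally I would check condition (3), that $\nabla^\theta_{[x,y]}=[\nabla^\theta_x,\nabla^\theta_y]$ for all $x,y\in\mg$. Since $\nabla^\theta_u=0$ for $u\in\R^n$, both sides depend only on the $\mk$-components: for $x=a+u$ and $y=b+v$ one has $[\nabla^\theta_x,\nabla^\theta_y]=[\nabla^\theta_a,\nabla^\theta_b]$, and $\nabla^\theta_{[x,y]}=\nabla^\theta_{[a,b]}$ because $[x,y]$ differs from $[a,b]\in\mk$ only by an element of $\R^n$. On the block-diagonal form this identity decouples into its two factors: on $\mk$ it reads $[\nabla^{\theta_\mk}_a,\nabla^{\theta_\mk}_b]=\nabla^{\theta_\mk}_{[a,b]}$, which holds because $(g_\mk,\theta_\mk)$ is conformally flat (so $\nabla^{\theta_\mk}$ is flat by Proposition \ref{pro:algLCP}(3) applied to $\mmu=\mk$); on $\R^n$ it reads $[\rho(a),\rho(b)]=\rho([a,b])$, which holds because $\rho$ is a representation. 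Hence $x\mapsto\nabla^\theta_x$ is a Lie algebra representation, condition (3) holds, and as $\mmu=\mg\neq 0$ while (1) and (2) are automatic, Proposition \ref{pro:algLCP} shows that $(\mg,g,\theta,\mg)$ is an LCP Lie algebra, i.e. $(g,\theta)$ is a conformally flat LCP structure.

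The only real work is the case analysis of \eqref{eq:LCP} in the second step; it is routine but one must track the orthogonality relations and the skew-symmetry of $\beta(a)$, which is what forces $\nabla^\theta_u=0$. Once the block structure is established, the flatness verification is immediate: the argument rests entirely on the two structural facts that $\nabla^{\theta_\mk}$ is already flat and that the scalar part $\theta_\mk(x)\Id_n$ of $\rho$ does not disturb the bracket because $\theta_\mk$ is closed.
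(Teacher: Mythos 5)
Your proof is correct and follows essentially the same route as the paper: both verify Proposition \ref{pro:algLCP}, compute the Weyl connection blockwise along $\mg=\mk\oplus\R^n$ (obtaining $\nabla^\theta_a=\nabla^{\theta_\mk}_a$ on $\mk$, $\nabla^\theta_a=\rho(a)$ on $\R^n$, and $\nabla^\theta_u=0$ for $u\in\R^n$), and deduce condition (3) from the fact that $\nabla^{\theta_\mk}$ and $\rho$ are Lie algebra representations. Your observation that conditions (1) and (2) are vacuous when $\mmu=\mg$ (since $\mmu^\perp=0$) is if anything cleaner than the paper's proof, which instead asserts that condition (2) reduces to the identity $g([u,x],x)=\theta(u)|x|^2$ for all $u,x\in\mg$ --- a check that is not actually required by Proposition \ref{pro:algLCP} in the conformally flat case.
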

\begin{proof}
By construction, $\theta\neq 0$ and also $\theta|_{\mg'}=\theta_\mk|_{\mk'}=0$ because $\theta_\mk$ is closed in $\mk$. Hence $d\theta=0$. 

Condition (1) in Proposition \ref{pro:algLCP} is trivially satisfied. Condition (2) in the same proposition reduces to
\[
g([u,x],x)=\theta(u)|x|^2, \qquad u,x\in \mg.
\]
This can be easily checked by polarization and decomposing $u$ and $x$ in their $\mk$ and $\R^n$ components, using the fact that $(\mk,g_\mk,\theta_k)$ is conformally flat and the explicit expression of the representation $\rho$ defining the semidirect product.

Straightforward computations using \eqref{eq:lc} show
\[
\nabla_x^g x'=\nabla^{g_\mk}_xx', \quad \nabla_x^gy=\beta(x)y, \quad  \nabla_y^g=-\theta\wedge y, \quad  \forall x,x'\in\mk, \; y\in \R^n,\]
and thus
\[
\nabla_x^\theta x'=\nabla^{\theta_\mk}_xx', \quad \nabla_x^\theta y=\rho(x)y, \quad  \nabla_y^\theta=0, \quad  \forall x,x'\in\mk, \; y\in \R^n.
\]
Since $\nabla^{\theta_\mk}$ and $\rho$ are representations, these equalities imply that  $\nabla^\theta:\mg\to\gl(\mg)$ is a Lie algebra representation as well. Therefore,  Proposition \ref{pro:algLCP}(3) holds and hence $(\mg,g,\theta)$ is conformally flat as claimed.
\end{proof}

\begin{pro}\label{pro:iibot}
    Let $(g,\theta)$ be a conformally flat LCP structure on a Lie algebra $\mg$ and let $\mmi$ denote the kernel of the Lie algebra representation $\nabla^\theta:\mg\to\gl(\mg)$. Then $\mmi$ is an abelian ideal, $\mmi^\bot$ is Lie subalgebra of compact type and 
    \begin{equation}\label{eq:briibot}
        [x,y]=\nabla_x^\theta y , \qquad \forall\, x\in\mg, \ \forall\,y\in\mmi.
    \end{equation}     
    
     Moreover,    
    $(g|_{\mmi^\bot},\theta|_{\mmi^\bot})$ is a conformally flat LCP structure on $\mmi^\bot$.
\end{pro}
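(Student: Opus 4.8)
The plan is to extract everything from two structural facts: that the Weyl connection $\nabla^\theta$ is torsion-free, and that the endomorphisms $A_x:=\nabla^\theta_x-\theta(x)\Id_\mg$ are skew-symmetric (by \eqref{eq:weylc2}) and assemble into a Lie algebra representation into $\so(\mg)$.

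First I would record torsion-freeness: subtracting the two instances of \eqref{eq:weylc} gives $\nabla^\theta_xy-\nabla^\theta_yx=[x,y]$ for all $x,y\in\mg$. Since $\mmu=\mg$, Proposition \ref{pro:algLCP}(3) says $\nabla^\theta\colon\mg\to\gl(\mg)$ is a representation, so its kernel $\mmi$ is an ideal. For $y\in\mmi$ one has $\nabla^\theta_y=0$, whence torsion-freeness yields $[x,y]=\nabla^\theta_xy$ for every $x\in\mg$, which is exactly \eqref{eq:briibot}; taking $x\in\mmi$ as well gives $[x,y]=\nabla^\theta_xy=0$, so $\mmi$ is abelian. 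I would also note here that $\theta|_\mmi=0$: if $\nabla^\theta_x=0$ then $A_x=-\theta(x)\Id_\mg\in\so(\mg)$, and since $\Id_\mg$ is not skew-symmetric this forces $\theta(x)=0$.

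Next I would show $\mmi^\bot$ is a subalgebra by proving that each $\nabla^\theta_x$ preserves it. Skew-symmetry of $A_x$ gives, for $y\in\mmi^\bot$ and $w\in\mmi$, the identity $g(\nabla^\theta_xy,w)=-g(y,\nabla^\theta_xw)+2\theta(x)g(y,w)$; here $g(y,w)=0$, while $\nabla^\theta_xw=[x,w]\in\mmi$ by \eqref{eq:briibot} and the fact that $\mmi$ is an ideal, so $g(y,\nabla^\theta_xw)=0$ too. Hence $\nabla^\theta_xy\in\mmi^\bot$, and torsion-freeness then gives $[x,y]=\nabla^\theta_xy-\nabla^\theta_yx\in\mmi^\bot$ for $x,y\in\mmi^\bot$. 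Comparing \eqref{eq:LCP} for $\mg$ with the same formula for the subalgebra $\mmi^\bot$ shows the Weyl connection of $(\mmi^\bot,g|_{\mmi^\bot},\theta|_{\mmi^\bot})$ is precisely $\nabla^\theta|_{\mmi^\bot}$; since $\nabla^\theta$ is a representation and preserves $\mmi^\bot$, this restriction is again a representation, so Proposition \ref{pro:algLCP}(3) gives that $(g|_{\mmi^\bot},\theta|_{\mmi^\bot})$ is a conformally flat LCP structure, nonzero because $\theta|_\mmi=0$ while $\theta\neq0$.

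The main point, and the step I expect to be the genuine obstacle, is that $\mmi^\bot$ is of compact type; my strategy is to build an $\ad$-invariant positive definite inner product on it. Differentiating $\nabla^\theta_{[x,y]}=[\nabla^\theta_x,\nabla^\theta_y]$ and using $\theta([x,y])=0$ shows $A_{[x,y]}=[A_x,A_y]$, so $A\colon\mg\to\so(\mg)$ is itself a representation. Then $B(x,y):=-\tr(A_xA_y)$ is $\ad$-invariant (the pullback of the invariant trace form of $\so(\mg)$) and positive semidefinite with $B(x,x)=0$ iff $A_x=0$; moreover $\theta\otimes\theta$ is $\ad$-invariant precisely because $\theta$ vanishes on brackets. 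Hence $B':=B+\theta\otimes\theta$ is $\ad$-invariant on $\mmi^\bot$, and $B'(x,x)=0$ forces $A_x=0$ and $\theta(x)=0$, i.e.\ $x\in\mmi\cap\mmi^\bot=0$, so $B'$ is positive definite. A bi-invariant metric exhibits $\mmi^\bot$ as a Lie algebra of compact type. The delicate feature handled automatically here is that $\ker A\cap\mmi^\bot$ may be nonzero (necessarily one-dimensional, spanned by an element on which $\theta$ does not vanish); the correction term $\theta\otimes\theta$ is exactly what restores positivity there, and it is invariant only thanks to the closedness of $\theta$.
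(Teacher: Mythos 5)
Your proof is correct and follows essentially the same route as the paper: the kernel of the representation is an ideal, $\theta|_\mmi=0$ and \eqref{eq:briibot} come from the decomposition $\nabla^\theta_x=\theta(x)\Id_\mg+A_x$ with $A_x\in\so(\mg)$, and compactness of $\mmi^\bot$ follows from injectivity of $\nabla^\theta$ there --- your invariant inner product $B'(x,y)=-\tr(A_xA_y)+\theta(x)\theta(y)$ is exactly the pullback under $\nabla^\theta|_{\mmi^\bot}$ of the natural bi-invariant product on $\R\,\Id_\mg\oplus\so(\mg)$, which is what the paper's one-sentence ``injective representation into a Lie algebra of compact type'' argument uses implicitly. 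The only stylistic difference is order of deduction: you get \eqref{eq:briibot} in one line from torsion-freeness and $\nabla^\theta_y=0$ and then deduce abelianness and the subalgebra property from it, whereas the paper establishes those two facts first by Koszul-formula computations and proves \eqref{eq:briibot} afterwards.
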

In particular, \eqref{eq:briibot} shows that $\mg$ can be written as the semidirect product $\mg=\mmi^\bot\ltimes\mmi$, where the action is given by the representation $\nabla^\theta$.
\begin{proof} By \eqref{eq:weylc1}, the kernel of $\nabla^\theta:\mg\to\gl(\mg)$ can be described as
\[
\mmi=\{x\in\mg:\nabla^g_x=x\wedge\theta, \,\theta(x)=0\}.
\]
Since $\nabla^\theta$ is a Lie algebra representation, $\mmi$ is an ideal of $\mg$. 
To show that it is abelian, consider $x,y,z\in\mmi$; the conditions $\nabla^g_x=x\wedge\theta$ and $\theta|_\mmi=0$ imply 
\[
\lela\nabla^g_xy,z\rira=-\lela (\theta\wedge x)y,z\rira=-\lela \theta(y)x-\lela x,y\rira\theta^\sharp,z\rira=0.
\]Hence $\lela [x,y],z\rira=\lela\nabla_x^gy-\nabla_y^gx,z\rira=0$ for all  $x,y,z\in\mmi$ and thus $\mmi$ is abelian.

Now, taking $x\in\mmi$, $y,z\in\mmi^\bot$ in \eqref{eq:LCP}, and using the fact that $\mmi$ is an ideal, $\nabla_x^\theta=0$, and $\theta|_{\mmi}=0$, we get
   \[
0=\lela\nabla^\theta_xy,z\rira=-\frac12\lela[y,z],x).
\]Therefore, $\mmi^\bot$ is a Lie subalgebra of $\mg$. The representation $\nabla^\theta$ restricts to an injective representation of $\mmi^\bot$ into a Lie algebra of compact type. Thus $\mmi^\bot$ is of compact type, and in particular unimodular.

Next we prove \eqref{eq:briibot}. If $x,y\in\mmi$, then $[x,y]=0$ and $\nabla_x^\theta y=0$ by the definition of $\mmi$, so the equality holds in this case. Now if $x\in\mmi^\bot$ and $y\in\mmi$, $[x,y]\in\mmi$ and for any $z\in\mmi$, \eqref{eq:weylc1} gives
\begin{eqnarray*}
\lela \nabla_x^\theta y,z\rira&=&\lela \nabla^g_xy+\theta(x)y,z\rira
=\lela \nabla^g_yx+[x,y]+\theta(x)y,z\rira\\
&=&\lela (y\wedge \theta)x+[x,y]+\theta(x)y,z\rira=\lela[x,y],z\rira,
\end{eqnarray*}showing the equality in all cases.

It remains to show that  $(g|_{\mmi^\bot},\theta|_{\mmi^\bot})$ is conformally flat. For this, let $x,y\in \mmi^\bot$ and $z\in\mmi$. Then $\theta|_\mmi=0$ together with \eqref{eq:weylc1} give
\[
\lela \nabla_x^\theta y,z\rira=\lela \nabla^g_xy+(\theta\wedge x)y+\theta(x)y,z\rira=\lela \nabla^g_xy,z\rira,
\]and this is zero because of Koszul's formula and the facts that $\mmi$ is an ideal and $\mmi^\bot$ a subalgebra. Therefore, $\nabla^g_xy\in\mmi^\bot$ for all $x,y\in\mmi^\bot$ and thus $\nabla^{\theta}$ defines a Lie algebra representation from $\mmi^\bot$ to $\gl(\mmi^\bot)$. Hence  $(g|_{\mmi^\bot},\theta|_{\mmi^\bot})$ is a conformally flat LCP structure by Proposition \ref{pro:algLCP}.
\end{proof}

\begin{cor}\label{cor:equiv}
   Let $(g,\theta)$ be a conformally flat LCP structure on a Lie algebra $\mg$. The following statements are equivalent:
   \begin{enumerate}
       \item $\mg$ is unimodular;
       \item $\mg$ is of compact type;
       \item $\nabla^\theta:\mg\to\gl(\mg)$ is injective.
   \end{enumerate}
\end{cor}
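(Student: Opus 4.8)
The plan is to prove the three equivalences by establishing a cycle of implications, leaning heavily on the structural information provided by Proposition \ref{pro:iibot}. First I would establish the implication $(3)\Rightarrow(2)$: if $\nabla^\theta$ is injective, then its kernel $\mmi$ is trivial, so $\mmi^\bot=\mg$, and Proposition \ref{pro:iibot} tells us precisely that $\mmi^\bot$ is of compact type. Thus $\mg$ itself is of compact type, which gives $(3)\Rightarrow(2)$ immediately. The implication $(2)\Rightarrow(1)$ is also essentially free, since a Lie algebra of compact type is in particular unimodular (its trace form vanishes because $\ad_x$ is skew-symmetric with respect to an invariant inner product). This was already noted in the proof of Proposition \ref{pro:iibot}.

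The remaining and most substantial implication is $(1)\Rightarrow(3)$, and this is where I expect the main obstacle to lie. The strategy is to show that unimodularity forces the abelian ideal $\mmi$ to be zero. Suppose for contradiction that $\mmi\neq 0$. Since $(g,\theta)$ is a \emph{conformally flat} LCP structure, we have $\mmu=\mg$, so the trace relations of Corollary \ref{cor:3.4} apply with $q=n=\dim\mg$, which would seem to force $\theta|_\mg=0$ — but that contradicts $\theta\neq 0$ unless we are careful about which subspace plays the role of $\mmu$. The cleaner route is to use $\mmi$ directly: by Proposition \ref{pro:iibot}, $\mg=\mmi^\bot\ltimes\mmi$ with the action of $\mmi^\bot$ on $\mmi$ given by the representation $\nabla^\theta$, and from \eqref{eq:weylc1} together with $\theta|_\mmi=0$, for $x\in\mmi^\bot$ the restriction $\nabla^\theta_x|_\mmi$ equals $\nabla^g_x|_\mmi+\theta(x)\Id$. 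I would compute the trace of $\ad_x$ on $\mg$ by splitting it into its action on $\mmi$ and on $\mmi^\bot$. The action on $\mmi$ is exactly $\nabla^\theta_x|_\mmi$ by \eqref{eq:briibot}, whose trace is $\operatorname{tr}(\nabla^g_x|_\mmi)+q'\theta(x)$ where $q'=\dim\mmi$; the skew-symmetric part $\nabla^g_x$ contributes zero trace, leaving $q'\theta(x)$. Since $\mmi^\bot$ is of compact type and hence unimodular, and $\mmi$ is abelian (so $\operatorname{ad}$ within $\mmi$ is trivial), the total trace form of $\mg$ restricted to $\mmi^\bot$ is then $q'\,\theta|_{\mmi^\bot}$.

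Unimodularity of $\mg$ now forces $q'\,\theta|_{\mmi^\bot}=0$, and since $\theta|_\mmi=0$ already, we would get $\theta\equiv 0$ as soon as $q'=\dim\mmi>0$, contradicting the hypothesis $\theta\neq 0$ in Definition \ref{defi}. Hence $\mmi=0$, i.e.\ $\nabla^\theta$ is injective, establishing $(1)\Rightarrow(3)$ and closing the cycle. The hard part will be pinning down the trace computation cleanly: one must verify that the off-diagonal blocks of $\ad_x$ (the map $\mmi^\bot\to\mmi$ coming from $[x,\mmi^\bot]$ having a component in $\mmi$, and the fact that $\mmi$ is an ideal so $[x,\mmi]\subset\mmi$) do not contribute to the trace, which follows because trace only sees the diagonal blocks and $\mmi$, $\mmi^\bot$ give a $\nabla^\theta$-compatible splitting. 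Once the bookkeeping of which endomorphism acts on which summand is done correctly, the conclusion drops out from the vanishing of $\theta$.
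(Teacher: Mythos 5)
Your proof is correct and follows essentially the same route as the paper's: both reduce everything to the kernel $\mmi$ of $\nabla^\theta$ via Proposition \ref{pro:iibot} and compute, for $x\in\mmi^\bot$, that $\tr\ad_x=\tr(\ad_x|_{\mmi})=\theta(x)\dim\mmi$ because $\ad_x|_{\mmi}=\theta(x)\Id_\mmi+\beta(x)$ with $\beta(x)\in\so(\mmi)$, so unimodularity forces $\mmi=0$, while the implications $(3)\Rightarrow(2)\Rightarrow(1)$ are immediate from Proposition \ref{pro:iibot} and the unimodularity of compact-type algebras. The one detail you use without justification is $\theta|_{\mmi}=0$; it is recorded in the proof of Proposition \ref{pro:iibot} and follows in one line by taking the trace in $0=\nabla^\theta_y=\nabla^g_y+\theta\wedge y+\theta(y)\Id_\mg$ for $y\in\mmi$.
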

\begin{proof}As before, let $\mmi$ denote the kernel of $\nabla^\theta:\mg\to\gl(\mg)$. By Proposition \ref{pro:iibot},  $\mmi^\bot$ is of compact type, thus unimodular so $\tr(\ad_x|_{\mmi^\bot})=0$ for all $x\in\mmi^\bot$. 
Moreover, the adjoint action of $\mmi^\bot$ preserves both $\mmi$ and $\mmi^\bot$, therefore for any $x\in\mmi^\bot$,
\begin{equation}\label{eq:tri}
\tr\ad_x=\tr(\ad_x|_{\mmi^\bot})+\tr(\ad_x|_{\mmi})=\tr(\ad_x|_{\mmi}).
\end{equation} In addition, equation \eqref{eq:briibot} together with \eqref{eq:weylc1} imply that for any $x\in\mmi^\bot$, $y\in \mmi$, 
\[
[x,y]=\theta(x)y+\beta(x)y, \quad \mbox{ for some }\beta(x)\in\so(\mmi).
\]This together with \eqref{eq:tri} implies
\[
\tr\ad_x=\theta(x)\dim\mmi, \quad \forall x\in\mmi^\bot,
\]where actually, $\theta|_{\mmi^\bot}\neq 0$ because $\theta|_\mmi=0$ and $\theta\neq 0$. Therefore, $\mg$ is unimodular if and only if $\mmi=0$. This is also equivalent to $\mg=\mmi^\bot$ which is of compact type.
\end{proof}

In the next examples, we construct conformally flat LCP structures on some unimodular Lie algebras.

\begin{ex}
    \label{ex:rp} Abelian Lie algebras of dimension 1 or 2 admit conformally flat LCP structures. 

    Indeed, given $p\in\{1,2\}$, an inner product $g$ on $\R^p$ and $\theta\in(\R^p)^*$, the Weyl connection defined by $\theta$ satisfies $\nabla_x^\theta=\theta\wedge x+\theta(x)\Id_{\R^p}$ for every $x\in\R^p$. For dimensional reasons, $[\theta\wedge x,\theta\wedge y]=0$ for every $x,y\in\R^p$, and thus $\nabla^\theta:\R^p\to \gl(p)$ is a Lie algebra representation. This shows that Proposition \ref{pro:algLCP}(3) holds, whilst (1) and (2) in that proposition hold trivially. Therefore,  $(g,\theta)$ is a conformally flat structure on $\R^p$.
\end{ex}

\begin{ex}
    \label{ex:su2r}    
Consider the direct sum Lie algebra $\mg:=\su(2)\oplus\R$, and fix an element $z$ spanning its center. Let $\kappa$ denote the Killing form of $\su(2)$, which is negative definite. 

Choose $\mu>0$, $0\neq\lambda\in\R$ and $x_0\in\su(2)$; we shall define a conformally flat LCP structure on $\mg$ using these parameters.

Let $g_{\mu,\lambda}$ be the metric on $\mg$ such that $g_{\mu,\lambda}|_{\su(2)}=-\mu\kappa $ and
\begin{eqnarray}
    g_{\mu,\lambda}(z,x)&=&\frac{\mu}{\lambda}\kappa(x_0,x)=-\frac1{\lambda}g_{\mu,\lambda}(x_0,x), \qquad\forall x\in\su(2),\label{eq:gml}\\
    |z|^2_{g_{\mu,\lambda}}&=&\frac1{\lambda^2}(\frac1{8\mu}-\mu\kappa(x_0,x_0))=\frac1{\lambda^2}(\frac1{8\mu}+|x_0|_{g_{\mu,\lambda}}^2).\label{eq:znorm}
\end{eqnarray}
From \eqref{eq:gml}, we get $g_{\lambda,\mu}(\lambda z+x_0,x)=0$ for all $x\in\su(2)$, which implies that the 1-form $\theta:=g_{\mu,\lambda}(\lambda z+x_0, \cdot)$ is closed in $\mg$. 
Notice that $\theta(z)=1/(8\mu\lambda)$ and $|\theta|^2=1/(8\mu)$; the latter value coincides with the the sectional curvature of the metric $-\mu\kappa$ in $\su(2)$. 

Using Koszul's formula, one gets for every $x\in\su(2)$:
\begin{equation}\label{eq:nabla}
\nabla_z^{g_{\mu,\lambda}}z=0,\quad \nabla_x^{g_{\lambda,\mu}}=\frac12\ad_x\in\so(\su(2)),\quad \nabla_x^{g_{\lambda,\mu}}z= \nabla_z^{g_{\lambda,\mu}}x=\frac{1}{2\lambda}[x_0,x].
\end{equation}
This implies, in particular, that $\nabla_w^{g_{\mu,\lambda}}\theta=0$ for all $w\in\mg$ and therefore, by \eqref{eq:weylc1},
\begin{equation}\label{eq:conm}
[\nabla_u^\theta,\nabla_w^\theta]=[\nabla_u^{g_{\mu,\lambda}},\nabla_w^{g_{\mu,\lambda}}]+\theta\wedge [u,w]+|\theta|^2u\wedge w+\theta\wedge (\theta(w)u-\theta(u)w)
\end{equation}
for any $u,w\in\mg$. Here and henceforth we denote by $u\wedge w$ the skew-symmetric endomorphism of $\mg$ defined as $(u\wedge w)(x)=g(u,x)w-g(w,x)v$, for all $x\in\mg$.

For any $x\in\su(2)$, \eqref{eq:conm}, together with the facts that $\theta(x)=0$, $z$ is central and $\theta=\lambda z+x_0$,  give
\begin{equation}\label{eq:cxz}
        [\nabla_x^\theta,\nabla_z^\theta]=[\nabla_x^{g_{\mu,\lambda}},\nabla_z^{g_{\mu,\lambda}}]+\frac{1}{8\mu\lambda}x_0\wedge x.
\end{equation}
On the one hand, straightforward computations using \eqref{eq:nabla} show that $[\nabla_x^{g_{\mu,\lambda}},\nabla_z^{g_{\mu,\lambda}}] (w)=\frac1{4\lambda}\ad_{[x,x_0]}w$, for every $w\in\mg$. On the other hand, making use the metric $g_{\mu,\lambda}$, we can identify the skew-symmetric endomorphisms
\begin{equation}\label{eq:ident}
   \ad_{[x,x_0]} =\frac1{2\mu} x\wedge x_0.
\end{equation} Therefore, \eqref{eq:cxz} becomes
\[
    [\nabla_x^\theta,\nabla_z^\theta]=\frac1{4\lambda}\ad_{[x,x_0]}+\frac{1}{8\mu\lambda}x_0\wedge x=\frac1{8\mu\lambda}x\wedge x_0+\frac{1}{8\mu\lambda}x_0\wedge x=0=\nabla^\theta_{[x,z]}.
\]

Now, given $x,y\in\su(2)$, since $\theta|_{\su(2)}=0$ and \eqref{eq:nabla} holds, \eqref{eq:conm} and \eqref{eq:ident} imply
\begin{equation*}\label{eq:cxy}
[\nabla_x^\theta,\nabla_y^\theta]=\frac14\ad_{[x,y]}+\theta\wedge [x,y]+\frac{1}{8\mu}x\wedge y=\frac12\ad_{[x,y]}+\theta\wedge [x,y]=\nabla^\theta_{[x,y]}.
\end{equation*}
These last two equations show that $\nabla^\theta:\mg\to\gl(\mg)$  is a representation so $(g_{\mu,\lambda},\theta)$ is a conformally flat LCP structure on $\su(2)\oplus \R$ by Proposition \ref{pro:algLCP}.
\end{ex}

We will show that the above examples exhaust all possible conformally flat LCP structures on unimodular Lie algebras. To do so, we first introduce a technical result.

\begin{lm}\label{lm:cflat}
    Let $(g,\theta)$ be a conformally flat LCP structure on a Lie algebra $\mg$ and let $R^g$ be the curvature tensor of the Levi-Civita connection $\nabla^g$. Then, the following equations hold:
    \begin{eqnarray}
          & \label{eq:2.1} R_{x,y}^g=-|\theta|^2x\wedge y+\theta(x)\theta\wedge y -\theta(y) \theta\wedge x - \nabla_x^g\theta \wedge y+\nabla_y^g\theta\wedge x,\quad \forall x,y\in\mg,\\
            & \label{eq:2.2} |\nabla^g \theta|^2=\lela H^\mg,\theta\rira.
    \end{eqnarray}
\end{lm}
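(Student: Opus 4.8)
The plan is to derive both identities from the single hypothesis of conformal flatness. Since $\mmu=\mg$, Proposition \ref{pro:algLCP}(3) says that $\nabla^\theta\colon\mg\to\gl(\mg)$ is a Lie algebra representation; equivalently, the Weyl connection is flat, i.e.\ $[\nabla^\theta_x,\nabla^\theta_y]=\nabla^\theta_{[x,y]}$ for all $x,y\in\mg$.

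To obtain \eqref{eq:2.1}, I would write $\nabla^\theta_x=\nabla^g_x+A_x$ with $A_x:=\theta\wedge x+\theta(x)\Id_\mg$ as in \eqref{eq:weylc1}, and expand $0=[\nabla^\theta_x,\nabla^\theta_y]-\nabla^\theta_{[x,y]}$. Since $\nabla^g$ is torsion-free, this gives $0=R^g_{x,y}+(\nabla^g_xA)_y-(\nabla^g_yA)_x+[A_x,A_y]$, where $R^g$ is the Levi-Civita curvature. The three correction terms are treated separately. Because $g$ is $\nabla^g$-parallel and $A$ is linear in $\theta$, covariant differentiation only acts on $\theta$, giving $(\nabla^g_xA)_y=(\nabla^g_x\theta)\wedge y+(\nabla^g_x\theta)(y)\Id_\mg$; subtracting the term with $x$ and $y$ interchanged, the $\Id_\mg$-contributions combine into $\big((\nabla^g_x\theta)(y)-(\nabla^g_y\theta)(x)\big)\Id_\mg=d\theta(x,y)\,\Id_\mg$, which vanishes because $\theta$ is closed. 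Finally $[A_x,A_y]=[\theta\wedge x,\theta\wedge y]$ (the $\Id_\mg$-summands are central), and evaluating this bracket with the standard commutator rule for decomposable elements of $\so(\mg)$ gives $|\theta|^2\,x\wedge y-\theta(x)\,\theta\wedge y+\theta(y)\,\theta\wedge x$. Solving for $R^g_{x,y}$ yields exactly \eqref{eq:2.1}.

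For \eqref{eq:2.2}, I would contract \eqref{eq:2.1} along $\ts$. Setting $y=\ts$, the first three terms cancel ($\theta\wedge\ts=\ts\wedge\ts=0$, and $-|\theta|^2x\wedge\ts$ cancels $-\theta(\ts)\,\theta\wedge x=-|\theta|^2\,\ts\wedge x$), leaving $R^g_{x,\ts}=-(\nabla^g_x\theta)\wedge\ts+(\nabla^g_{\ts}\theta)\wedge x$. Tracing $\Ric(\ts,\ts)=\sum_i g(R^g_{e_i,\ts}\ts,e_i)$ over an orthonormal basis and using that $|\theta|^2$ is constant (hence $g(\nabla^g_x\ts,\ts)=0$ for every $x$), together with the Koszul identity $\sum_i g(\nabla^g_{e_i}\ts,e_i)=\operatorname{div}\ts=-H^\mg(\ts)$, produces $\Ric(\ts,\ts)=-|\theta|^2\,g(H^\mg,\theta)$. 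On the other hand, $\theta$ is closed with constant codifferential, hence harmonic on the group, so the Weitzenb\"ock formula gives $\nabla^*\nabla\theta=-\Ric(\theta)$; pairing with $\theta$ and using once more that $|\theta|^2$ is constant yields $|\nabla^g\theta|^2=g(\nabla^*\nabla\theta,\theta)=-\Ric(\ts,\ts)$. Combining the two gives $|\nabla^g\theta|^2=|\theta|^2\,g(H^\mg,\theta)$, which is \eqref{eq:2.2}.

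The main obstacle is the sign bookkeeping in \eqref{eq:2.1}: correctly evaluating the commutator $[\theta\wedge x,\theta\wedge y]$ and verifying that all scalar $\Id_\mg$-contributions cancel — both steps rely crucially on the closedness of $\theta$ (equivalently, on the symmetry of $\nabla^g\theta$). For \eqref{eq:2.2} the only delicate point is licensing the Bochner argument through the harmonicity of $\theta$; alternatively one may expand $\Ric(\ts,\ts)$ directly from the definition of $R^g$ and equate it with the value obtained from \eqref{eq:2.1}, which recovers $|\nabla^g\theta|^2$ at the cost of separately checking the identity $H^\mg(\nabla^g_{\ts}\ts)=0$.
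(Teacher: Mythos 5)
Your proof is correct and follows essentially the same route as the paper's. For \eqref{eq:2.1}, the paper also expands the flatness of $\nabla^\theta$, writing $\nabla^\theta_x=\theta(x)\Id_\mg+\beta(x)$ with $\beta(x)=\nabla^g_x+\theta\wedge x$ and using that $\beta$ is a representation because $\theta$ is closed; this is the same computation as your curvature-difference formula, with the scalar part peeled off at the outset instead of cancelled via $d\theta(x,y)\Id_\mg=0$ at the end. For \eqref{eq:2.2} the paper runs the same Bochner argument with $\Delta\theta=d(\delta^g\theta)=0$; the only cosmetic difference is that it computes the full Ricci endomorphism from \eqref{eq:2.1} and evaluates at $\theta^\sharp$, whereas you contract \eqref{eq:2.1} at $y=\theta^\sharp$ directly, both hinging on $g(\nabla^g\theta^\sharp,\theta^\sharp)=0$ and $\delta^g\theta=g(H^\mg,\theta)$.

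One point deserves attention. Your final identity $|\nabla^g\theta|^2=|\theta|^2\,g(H^\mg,\theta)$ is not literally \eqref{eq:2.2} as displayed, which lacks the factor $|\theta|^2$, so you should not assert without comment that it ``is'' \eqref{eq:2.2}. In fact your version is the correct one, and the paper's own proof derives exactly the same thing: its last display reads $g(H^\mg,\theta)|\theta|^2=|\nabla^g\theta|^2$, so the statement of \eqref{eq:2.2} contains a typo. A quick sanity check: under the constant rescaling $g\mapsto\lambda g$ (which preserves the LCP structure and leaves $\nabla^g$ unchanged), the quantities $|\nabla^g\theta|^2$, $g(H^\mg,\theta)$ and $|\theta|^2$ scale by $\lambda^{-2}$, $\lambda^{-1}$ and $\lambda^{-1}$ respectively, so your identity is homogeneous while the displayed \eqref{eq:2.2} is not; one can also verify the factor explicitly on $\mg=\R b\ltimes\R^n$ with $\ad_b=t\,\Id_n$ and $|b|=c$, where $|\nabla^g\theta|^2=nt^4/c^4$ while $g(H^\mg,\theta)=nt^2/c^2$. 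The discrepancy is harmless downstream, since Proposition \ref{pro:cflatunim} and Theorem \ref{teo:unim} only use the vanishing, respectively the sign, of the right-hand side, and the two versions agree after normalizing $|\theta|=1$.
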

\begin{proof} Assume $(g,\theta)$ is a conformally flat structure on $\mg$ and consider the representation $\nabla^\theta:\mg\to\gl(\mg)$ given in Proposition \ref{pro:algLCP}. By \eqref{eq:weylc1}, we have
\begin{equation*}
    \nabla_x^\theta=\theta(x)\Id_\mg+\beta(x),\qquad \forall x\in\mg,
\end{equation*}where $\beta:\mg\to\so(\mg)$ is defined by
\begin{equation*}
    \beta (x):=\nabla_x^g+\theta\wedge x\in\so(\mg).
\end{equation*} It is easy to check that $\beta$ is a representation of $\mg$ because $\nabla^\theta$ is so and $\theta$ is closed. Therefore, for every $x,y\in\mg$,
\begin{multline*}
    0=[\beta(x),\beta(y)]-\beta([x,y])=[\nabla_x^g,\nabla_y^g]+[\nabla_x^g,\theta\wedge y]+[\theta\wedge x,\nabla_y^g]+[\theta\wedge x,\theta\wedge y]-\nabla_{[x,y]}^g-\theta\wedge [x,y]\\
    =R_{x,y}^g+\nabla_x^g\theta \wedge y+\theta\wedge \nabla_x^gy-\nabla_y^g\theta\wedge x-\theta\wedge\nabla_y^gx+|\theta|^2x\wedge y-\theta(x)\theta\wedge y+\theta(y) \theta \wedge x-\theta\wedge 
[x,y]\\
=R_{x,y}^g+\nabla_x^g\theta \wedge y-\nabla_y^g\theta\wedge x+|\theta|^2x\wedge y-\theta(x)\theta\wedge y+\theta(y) \theta \wedge x
\end{multline*}
which implies
\[
R_{x,y}^g=-|\theta|^2x\wedge y+\theta(x)\theta\wedge y -\theta(y) \theta\wedge x - \nabla_x^g\theta \wedge y+\nabla_y^g\theta\wedge x, \quad \forall x,y\in\mg,
\]giving the first equality in the statement.

With this expression for the curvature, we can thus compute the Ricci tensor of $g$, obtaining
\begin{equation}\label{eq:ric}
\Ric^gx =|\theta|^2(n-2)x-(n-2)\theta(x)\theta^\sharp+(n-2)\nabla^g_x\theta^\sharp-x\delta^g \theta,\qquad\forall x\in\mg,
\end{equation} where $\delta^g$ denotes the codifferential of $g$.

Since $\theta$ is closed and left-invariant, $\Delta \theta=0$, thus \eqref{eq:ric} applied to $x:=\theta^\sharp$ together with the Bochner formula for $\theta$ yields
\begin{equation}\label{eq:boch}
0=(\nabla^g)^*\nabla^g\theta^\sharp+\Ric^g\theta^\sharp = (\nabla^g)^*\nabla^g\theta^\sharp+(n-2)\nabla^g_{\theta^\sharp}\theta^\sharp-\theta^\sharp \delta^g \theta.
\end{equation}
We shall compute the last two terms of this expression.
On the one hand, for every $x\in\mg$,
\[
\lela\nabla^g_{\theta^\sharp}\theta^\sharp,x\rira=\lela [x,\theta^\sharp],\theta^\sharp\rira=\theta([x,\theta^\sharp])=0
\]because $d\theta=0$. On the other hand, if $\{e_i\}_{i=1}^n$ is an orthonormal basis of $\mg$, \[
\delta^g\theta=-\sum_{i=1}^n\lela e_i,\nabla^g_{e_i}\theta^\sharp\rira=\tr \ad_{\theta^\sharp}=\lela H^\mg,\theta).
\]
Therefore, \eqref{eq:boch} becomes
\[
 (\nabla^g)^*\nabla^g\theta=\lela H^\mg,\theta)\theta.
\]Taking the inner product with $\theta$ we obtain
\[
\lela H^\mg,\theta)|\theta|^2=\lela (\nabla^g)^*\nabla^g\theta,\theta\rira=\sum_{i=1}^n\lela -\nabla_{e_i}^g\nabla_{e_i}^g\theta+\nabla^g_{\nabla^g_{e_i}e_i}\theta,\theta\rira=\sum_{i=1}^n|\nabla_{e_i}^g\theta|^2=|\nabla^g\theta|^2,
\]because $\nabla^g_{x}$ is skew-symmetric for any $x\in\mg$. This shows the second formula.
\end{proof}


\begin{pro}\label{pro:cflatunim}
    Let $(g,\theta)$ be a conformally flat LCP structure on a unimodular Lie algebra $\mg$. Then $\mg$ is isomorphic to one of the Lie algebras:
    \[
\R, \qquad \R^2,\qquad \su(2)\oplus\R,  
    \]and the LCP structure is as in Examples \ref{ex:rp} or \ref{ex:su2r}, respectively.
    In particular, in the last case, the metric $g|_{\su(2)}$ is a multiple of the Killing form.
\end{pro}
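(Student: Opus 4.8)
The plan is to first use unimodularity to show that $\theta^\sharp$ is parallel, and then to read off the local Riemannian geometry of $(G,g)$ from the curvature formula in Lemma \ref{lm:cflat}. Since $\mg$ is unimodular we have $H^\mg=0$, so the Bochner-type identity \eqref{eq:2.2} gives $|\nabla^g\theta|^2=\langle H^\mg,\theta\rangle=0$, whence $\nabla^g\theta=0$, i.e. $\theta^\sharp$ is $\nabla^g$-parallel. Feeding $\nabla^g\theta=0$ into \eqref{eq:2.1} leaves
\begin{equation*}
R^g_{x,y}=-|\theta|^2\,x\wedge y+\theta(x)\,\theta\wedge y-\theta(y)\,\theta\wedge x,\qquad\forall\,x,y\in\mg,
\end{equation*}
from which a direct computation shows that the sectional curvature equals $|\theta|^2>0$ on every plane contained in $\theta^\perp=\ker\theta$ and vanishes on every plane containing $\theta^\sharp$. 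By Corollary \ref{cor:equiv}, unimodularity also forces $\mg$ to be of compact type, which I use at the end.

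Next I pass to the global geometry. The left-invariant metric makes $(G,g)$ complete and homogeneous, and $\theta^\sharp$ being parallel means the orthogonal splitting $\mg=\R\theta^\sharp\oplus\theta^\perp$ integrates to complementary parallel (hence integrable, totally geodesic) distributions. Applying the de Rham decomposition theorem to the complete simply connected cover $\tilde G$ gives an isometry $\tilde G\cong\R\times N$, where the $\R$-factor is the flat $\theta^\sharp$-direction and $N$ is complete, simply connected, of constant curvature $|\theta|^2>0$; by the classification of space forms $N\cong S^{n-1}$ with $n=\dim\mg$, so $\tilde G$ is diffeomorphic to $\R\times S^{n-1}$. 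On the other hand, writing $\mg=\mz\oplus\ms$ with $\mz$ the centre and $\ms=[\mg,\mg]$ compact semisimple, the simply connected group is $\tilde G\cong\R^{\dim\mz}\times\tilde K$ with $\tilde K$ compact simply connected. For $n\ge3$ (so $S^{n-1}$ is simply connected), the two descriptions deformation retract onto $\tilde K$ and onto $S^{n-1}$ respectively, giving $\tilde K\simeq S^{n-1}$; since the top nonzero cohomology of the closed manifold $\tilde K$ sits in its own dimension, $\dim\tilde K=n-1$ and hence $\dim\mz=1$. A compact simply connected Lie group homotopy equivalent to a sphere must have the cohomology of a sphere, hence rank $1$, forcing $\tilde K=\SU(2)\simeq S^3$; thus $n=4$ and $\mg\cong\su(2)\oplus\R$. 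The low-dimensional cases are immediate: $n=1$ gives $\mg=\R$ and the only $2$-dimensional unimodular Lie algebra is $\mg=\R^2$, and on $\R,\R^2$ every pair $(g,\theta)$ is a conformally flat LCP structure of the form in Example \ref{ex:rp}.

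It remains to identify the data in the case $\mg=\su(2)\oplus\R$. Since $\theta$ is closed it vanishes on $\mg'=\su(2)$, so $\theta^\sharp\in\su(2)^\perp$; as $\dim\su(2)^\perp=1$ this forces $\theta^\perp=\su(2)$, which is exactly the totally geodesic leaf direction of constant curvature $|\theta|^2$. Hence the induced left-invariant metric $g|_{\su(2)}$ on the subgroup $\SU(2)$ has constant sectional curvature, and a constant-curvature left-invariant metric on $\SU(2)\cong S^3$ is necessarily bi-invariant, so $g|_{\su(2)}=-\mu\kappa$ for some $\mu>0$ with $|\theta|^2=\tfrac1{8\mu}$. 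To recover Example \ref{ex:su2r} I fix a generator $z$ of the centre, decompose $\theta^\sharp=\lambda z+x_0$ with $x_0\in\su(2)$ (and $\lambda\ne0$, since $\theta^\sharp\ne0$ is orthogonal to $\su(2)$), and then read off the mixed components $g(z,\cdot)|_{\su(2)}$ and the norm $|z|^2$ from $g|_{\su(2)}=-\mu\kappa$ together with $|\theta^\sharp|^2=\tfrac1{8\mu}$; a short computation reproduces exactly \eqref{eq:gml}--\eqref{eq:znorm}.

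I expect the main obstacle to be precisely the passage from the algebraic curvature identity to the global classification of $\mg$: it requires invoking completeness and the de Rham theorem on $\tilde G$, the classification of positively curved space forms, and the topological fact that the only compact simply connected Lie group which is a homotopy sphere is $\SU(2)$. By contrast, once $\mg$ is known, the identification of $g|_{\su(2)}$ as a multiple of the Killing form and the explicit matching with Example \ref{ex:su2r} are routine.
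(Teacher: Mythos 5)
Your proof is correct, and it opens and closes exactly as the paper does: unimodularity plus \eqref{eq:2.2} gives $\nabla^g\theta=0$, the curvature identity \eqref{eq:2.1} then yields constant curvature $|\theta|^2>0$ transverse to $\theta^\sharp$, and at the end Lemma \ref{lm:bii} together with the decomposition $\theta^\sharp=\lambda z+x_0$ recovers \eqref{eq:gml}--\eqref{eq:znorm} (your computation of $g(z,x)=-\tfrac1\lambda g(x_0,x)$ and $|z|^2$ from $\theta^\sharp\perp\su(2)$ and $|\theta^\sharp|^2=\tfrac1{8\mu}$ checks out). Where you genuinely diverge is the middle step identifying $\mg$. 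The paper exploits that $\mk=\ker\theta$ is a codimension-one ideal (since $d\theta=0$), totally geodesic because $\nabla^g\theta=0$; the corresponding simply connected subgroup $K$ is then itself a complete simply connected space form of positive curvature, hence a round sphere, so $\mathbb{S}^{n-1}$ must carry a Lie group structure and $n-1\in\{0,1,3\}$, giving $\mk\in\{0,\R,\su(2)\}$ directly. You instead ignore the subalgebra structure of $\ker\theta$ at this stage and argue globally: the de Rham decomposition $\tilde G\cong\R\times S^{n-1}$, comparison with the compact-type splitting $\tilde G\cong\R^{\dim\mz}\times\tilde K$ coming from Corollary \ref{cor:equiv}, and Hopf's theorem (the rational cohomology of a compact Lie group is an exterior algebra on odd generators) to force $\tilde K$ to have rank one, hence $\tilde K=\SU(2)$, $n=4$ and $\dim\mz=1$. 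Both routes rest on the same two classification inputs --- Killing--Hopf and the scarcity of group structures on spheres --- but the paper's is lighter, needing no de Rham theorem and no algebraic topology, with the low-dimensional cases read off directly from compact type; yours trades this extra machinery for a uniform exclusion of $n=3$ (odd generators, rather than the paper's implicit ``$S^2$ is not a Lie group'') and obtains $\dim\mz=1$ as an output rather than deducing $\mg=\su(2)\oplus\R$ from $\mk=\su(2)$ and compact type. The auxiliary steps you invoke (completeness of left-invariant metrics, simple connectivity and total geodesy of the de Rham factors, left-invariance of the induced metric on the leaf $\SU(2)$ so that Lemma \ref{lm:bii} applies, and $\lambda\ne0$) are all sound, so this stands as a valid alternative proof.
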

\begin{proof}
Assume that $(\mg,g,\theta)$ is a conformally flat LCP Lie algebra and $\mg$ is unimodular, that is, $H^\mg=0$. By \eqref{eq:2.2} we obtain $\nabla^g \theta=0$, so \eqref{eq:2.1} becomes
\begin{equation}\label{eq:Runim}
 R_{x,y}^g=-|\theta|^2x\wedge y+\theta(x) \theta \wedge y-\theta(y) \theta \wedge  x,\quad \forall x,y\in\mg.    
\end{equation}

Consider the codimension 1 subspace $\mk:=\ker \theta$ of $\mg$ (which is an ideal due to $d\theta=0$). This defines by left translations a totally geodesic distribution. Indeed,
\[
\theta( \nabla^g_xy)=-(\nabla^g_x\theta)(y)=0, \qquad \forall x,y\in\mk,
\]
because $\nabla^g\theta=0$. Therefore, by \eqref{eq:Runim}, the curvature of $(\mk,g|_\mk)$
is $R_{x,y}^g=-|\theta|^2x\wedge y$, so the corresponding simply connected Lie subgroup $K$ with Lie algebra $\mk$ has a metric of constant positive sectional curvature $|\theta|^2$. 

This implies that $K$ is diffeomorphic to the sphere $\mathbb S^{n-1}$, and thus $\mathbb S^{n-1}$ has a Lie group structure. Hence $n-1$ is either 0, 1, or 3, and $\mk=0$, $\mk=\R$ or $\mk=\su(2)$, respectively.

As $\mg$ is of compact type (by Corollary \ref{cor:equiv}), if $n=\dim\mg=1,2$, the only possibilities are $\mg=\R$ or $\mg=\R^2$. It is clear that the conformally flat structure is as in Example \ref{ex:rp}.

If $\dim\mg=4$, then $\mk=\su(2)$ and thus $\mg=\su(2)\oplus\R$, with $g|_{\su(2)}$ being of constant sectional curvature. Lemma \ref{lm:bii} below shows that $g|_{\su(2)}$ is a multiple of the Killing form, fact that we assume for the moment. In this case, $g|_{\su(2)}=-\mu\kappa$ with $\mu:=\frac1{8|\theta|^2}$, since the sectional curvature of $g|_{\su(2)}$ is $|\theta|^2$.

Fix $z$ an element spanning the center of $\mg=\su(2)\oplus\R$, and write $\theta^\sharp=\lambda z+x_0$ with $\lambda\neq 0$ and $x_0\in\su(2)$. One can readily check that the metric $g$ is $g_{\lambda,\mu}$ and thus the conformally flat structure is as in Example \ref{ex:su2r}.
\end{proof}

\begin{remark}\label{rem:latcf} Notice that all simply connected Lie groups corresponding to unimodular Lie algebras admitting conformally flat LCP structures carry co-compact lattices.
\end{remark}
We now prove the result claimed in the proof above.

\begin{lm} \label{lm:bii}
    A left-invariant metric on $\mathrm{SU}(2)$ has constant sectional curvature if and only if it is bi-invariant.
\end{lm}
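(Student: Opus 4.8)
The statement concerns a left-invariant metric on $\SU(2)$, which is the same as an inner product $g$ on the Lie algebra $\su(2)$, so I work entirely at the Lie algebra level. The plan is to exploit Milnor's normal form \cite{Mil76}: since $\su(2)$ is three-dimensional and unimodular, for any inner product $g$ there is a positively oriented $g$-orthonormal basis $\{e_1,e_2,e_3\}$ and constants $\lambda_1,\lambda_2,\lambda_3$ with $[e_2,e_3]=\lambda_1e_1$, $[e_3,e_1]=\lambda_2e_2$, $[e_1,e_2]=\lambda_3e_3$. Moreover the signs of $(\lambda_1,\lambda_2,\lambda_3)$ characterize the isomorphism type, and $\su(2)$ corresponds to all three being nonzero of the same sign, say all positive after reorienting the basis. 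In this frame $\lela[x,y],z\rira$ is totally skew-symmetric — equivalently $g$ is bi-invariant — precisely when $\lambda_1=\lambda_2=\lambda_3$. Thus both implications reduce to relating the condition $\lambda_1=\lambda_2=\lambda_3$ to constant sectional curvature.

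For the direction bi-invariant $\Rightarrow$ constant curvature I would argue by homogeneity rather than by a direct computation. A bi-invariant metric makes $(\SU(2),g)$ a Riemannian symmetric space whose isotropy at the identity acts on $T_e\SU(2)=\su(2)$ through $\Ad(\SU(2))$. Since $\Ad:\SU(2)\to\SO(\su(2))\cong\SO(3)$ is the surjective double cover, the isotropy acts as all of $\SO(3)$, which is transitive on $2$-planes in $\su(2)\cong\R^3$. As sectional curvature is an isometry invariant, it is independent of the $2$-plane at $e$, hence constant everywhere by left-invariance.

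For the converse I would compute the sectional curvatures of the coordinate planes in the Milnor frame. Setting $\mu_i:=\tfrac12(\lambda_1+\lambda_2+\lambda_3)-\lambda_i$, Koszul's formula \eqref{eq:lc} gives $\nabla^g_{e_i}e_i=0$ and, for each cyclic triple $(i,j,k)$, $\nabla^g_{e_i}e_j=\mu_ie_k$ and $\nabla^g_{e_j}e_i=-\mu_je_k$; a short computation of $R^g(e_i,e_j)e_j$ then yields
\begin{equation*}
K(e_i,e_j)=\mu_i\mu_k+\mu_j\mu_k-\mu_i\mu_j,\qquad\{i,j,k\}=\{1,2,3\}.
\end{equation*}
If $g$ has constant sectional curvature, these three numbers coincide, and subtracting them pairwise gives $\mu_1(\mu_2-\mu_3)=\mu_2(\mu_1-\mu_3)=\mu_3(\mu_1-\mu_2)=0$. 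The only real obstacle is to deduce $\mu_1=\mu_2=\mu_3$ from these relations. If all $\mu_i$ are nonzero this is immediate. The delicate case is when some $\mu_i$ vanishes: I would show that, under the equal-curvature relations, one vanishing $\mu_i$ forces a second one, and that two vanishing $\mu_i$ force some $\lambda_j=0$ (via $\lambda_i=\mu_j+\mu_k$), contradicting the fact that all $\lambda_i$ are nonzero for $\su(2)$. Hence all $\mu_i$ are equal, so all $\lambda_i$ are equal and $g$ is bi-invariant. As a byproduct, on the simple Lie algebra $\su(2)$ this bi-invariant metric is necessarily a multiple of the Killing form, as used in the proof of Proposition \ref{pro:cflatunim}.
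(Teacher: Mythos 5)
Your proof is correct, but it takes a genuinely different route from the paper. You work entirely at the Lie algebra level via Milnor's diagonalization: an orthonormal frame with $[e_2,e_3]=\lambda_1e_1$, $[e_3,e_1]=\lambda_2e_2$, $[e_1,e_2]=\lambda_3e_3$, all $\lambda_i>0$ for $\su(2)$, the observation that bi-invariance is exactly $\lambda_1=\lambda_2=\lambda_3$, and then Milnor's sectional-curvature formulas $K(e_i,e_j)=\mu_i\mu_k+\mu_j\mu_k-\mu_i\mu_j$ (which I checked against Koszul's formula; your covariant-derivative table and the pairwise differences $2\mu_k(\mu_i-\mu_j)=0$ are right, and your degenerate-case analysis correctly rules out a vanishing $\mu_i$ via $\lambda_k=\mu_i+\mu_j\neq 0$). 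The paper instead argues globally and conceptually: it rescales so that constant curvature identifies $\mathrm{SU}(2)$ isometrically with the round $\mathbb{S}^3$ (implicitly using Killing--Hopf), views the right-invariant fields as nowhere-vanishing Killing fields, obtains a morphism $f=(f_1,f_2):\su(2)\to\su(2)\oplus\su(2)\simeq\mathfrak{isom}(\mathbb{S}^3)$, and uses the quaternionic model \eqref{eq:xi} to show one component must vanish --- otherwise the Killing field $\xi(f(x))$ would have a zero at $v=a\cdot x\cdot b^{-1}$; the complementary $\su(2)$ of isometries then commutes with right-invariant fields, hence consists of left-invariant fields, forcing bi-invariance. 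Your approach buys elementary self-containedness (modulo Milnor's normal form), explicit curvature formulas, and as you note it delivers directly the byproduct needed in Proposition \ref{pro:cflatunim}, that the metric is a multiple of the Killing form; note also that your argument only needs the three coordinate-plane curvatures to agree, a weaker hypothesis than full constant curvature. The paper's approach avoids all curvature computation and explains the result structurally through the isometry group of the round sphere, at the cost of invoking the global classification of constant-curvature metrics. One cosmetic point: your forward direction invokes the symmetric-space structure, but all you actually use is that conjugations are isometries fixing $e$ with differentials $\Ad(\mathrm{SU}(2))=\SO(3)$ acting transitively on $2$-planes --- which is immediate from bi-invariance itself; the paper simply declares this direction obvious.
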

\begin{proof} The converse statement being obvious, assume that that $g$ is a scalar product on $\su(2)$ such that the induced left-invariant metric $g$ on $\mathrm{SU}(2)$ has constant sectional curvature. Up to rescaling, we can assume that $\mathrm{SU}(2)$ is isometric to the round sphere $\mathbb{S}^3$. 

The right-invariant vector fields on $\mathrm{SU}(2)$ are Killing vector fields. This gives a Lie algebra (anti-) morphism $f:\su(2)\to\mathfrak{isom}(\mathbb{S}^3)\simeq \su(2)\oplus\su(2)$ such that for every non-zero $x\in \su(2)$, the Killing vector field on $\mathbb{S}^3$ determined by $f(x)$ has no zeros. 

Recall that upon identification of $\su(2)$ with the Lie algebra of imaginary quaternions, and of $\mathbb{S}^3$ with the sphere of unit quaternions, the Killing vector field on $\mathbb{S}^3$ determined by an element $(a,b)\in\su(2)\oplus\su(2)$ is given by 
\begin{equation}\label{eq:xi}
    \xi(x,y)_v:=x\cdot v - v\cdot y,\qquad\forall\, v\in \mathbb{S}^3\subset\H,
\end{equation} where $\H$ denotes the algebra of quaternions and $\cdot$  denotes their product.

Now, let us write the Lie algebra morphism $f$ as $f=(f_1,f_2)$, for Lie algebra morphisms $f_i:\su(2)\to\su(2)$. We claim that one of $f_1$ and $f_2$ vanishes. 

Assume for a contradiction that both $f_1$ and $f_2$ are non-zero. Since $\su(2)$ is simple, $f_i$ are both automorphisms of $\su(2)$, so there exist $a,b\in\mathrm{SU}(2)\simeq \mathbb{S}^3\subset\H$ such that $f(x)=(a\cdot x\cdot a^{-1},b\cdot x\cdot b^{-1})$ for every $x\in\su(2)$. Then for every imaginary quaternion of unit length $x\in \su(2)\cap\mathbb{S}^3$, the vector field $\xi(f(x))$ of $\mathbb{S}^3$ vanishes at $v:=a\cdot x\cdot b^{-1}\in \mathbb{S}^3$ by \eqref{eq:xi}. This is a contradiction, thus proving our claim.

It follows that there exists a 3-dimensional Lie algebra $\mh\subset\mathfrak{isom}(\mathrm{SU}(2),g)$ all of which elements commute with the image of $f$, i.e. with the right-invariant vector fields. On the other hand, every vector field on a connected Lie group commuting with the right-invariant vector fields is left-invariant. For dimensional reasons we get that $\mh$ is equal to the Lie algebra of left-invariant vector fields on $\su(2)$, which are therefore Killing. This shows that the metric $g$ is bi-invariant.
\end{proof}

Notice that Lemma \ref{lm:bii} also follows from \cite[Lemma 2.7]{PKP09}, by an argument using brute force computations of the Ricci curvature of a left-invariant metric on $\mathrm{SU}(2)$ in an adapted orthogonal frame. 

The results above account to the following classification of conformally flat LCP structures on Lie algebras. 

\begin{teo} \label{teo:krn} Let $\mg$ be a Lie algebra admitting a conformally flat LCP structure $(g,\theta)$. Then $\mg$ is isomorphic to a Lie algebra
\[
\mk\ltimes \R^n, 
\]for some $n\geq 0$, with $\mk$ either $\R^p$ for $p\in\{1,2\}$ or $\su(2)\oplus \R$. Moreover, the conformally flat structure in $\mg$ is an extension as in Proposition \ref{ex:sem} of the conformally flat structures in Examples \ref{ex:rp} and \ref{ex:su2r}.
\end{teo}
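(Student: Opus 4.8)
The plan is to assemble this classification directly from the structural results already proved, namely Proposition \ref{pro:iibot} and Proposition \ref{pro:cflatunim}, and then to recognize the resulting semidirect product as an instance of the construction in Proposition \ref{ex:sem}.

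First I would start with a conformally flat LCP structure $(g,\theta)$ on $\mg$, set $\mmi=\ker(\nabla^\theta:\mg\to\gl(\mg))$, and invoke Proposition \ref{pro:iibot}. This immediately yields that $\mmi$ is an abelian ideal, that $\mk:=\mmi^\bot$ is a Lie subalgebra of compact type, that $(g|_\mk,\theta|_\mk)$ is again a conformally flat LCP structure on $\mk$, and---through equation \eqref{eq:briibot}---that $\mg=\mk\ltimes\mmi$ with the action of $\mk$ on $\mmi$ given by $\nabla^\theta$. Since $\mk$ is of compact type it is in particular unimodular, so Proposition \ref{pro:cflatunim} applies and identifies $\mk$ with one of $\R$, $\R^2$ or $\su(2)\oplus\R$, carrying the LCP structure of Example \ref{ex:rp} or \ref{ex:su2r}. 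Setting $n:=\dim\mmi$ and identifying $\mmi$ with $\R^n$ as an inner product space, this already produces the base $\mk$ and the abelian factor $\R^n$ of the statement, equipped with the orthogonal splitting $g=g|_\mk+g|_\mmi$ and with $\theta$ restricting to $\theta|_\mk$ on $\mk$ and to $0$ on $\mmi$.

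The remaining step is to check that this semidirect product is precisely the one appearing in Proposition \ref{ex:sem}. For this I would analyze the action representation $\rho:=\nabla^\theta|_\mmi:\mk\to\gl(\mmi)$. Combining \eqref{eq:briibot} with \eqref{eq:weylc1} exactly as in the proof of Corollary \ref{cor:equiv}, one obtains $\rho(x)=\theta(x)\Id_\mmi+\beta(x)$ for every $x\in\mk$, with $\beta(x)\in\so(\mmi)$; thus $\rho$ has exactly the shape required in Proposition \ref{ex:sem}, with $\theta|_\mk$ in the role of the Lee form of the base factor. It remains only to verify that $\beta:\mk\to\so(\mmi)$ is a Lie algebra representation, which follows because $\rho$ is a representation (as $\mmi$ is $\nabla^\theta$-parallel) and because the scalar part $x\mapsto\theta(x)\Id_\mmi$ is itself a representation: indeed $\theta$ is closed, hence vanishes on $\mg'$, so $\theta([x,y])=0$ for $x,y\in\mk$ and $x\mapsto\theta(x)$ is a Lie algebra morphism $\mk\to\R$. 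This identifies $(\mg,g,\theta)$ with the extension built in Proposition \ref{ex:sem} from the base $(\mk,g|_\mk,\theta|_\mk)$ and the orthogonal representation $\beta$, completing the proof.

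Since essentially all of the work is already contained in the cited propositions, I do not expect a genuine obstacle; the only point demanding a little care is the bookkeeping in the last paragraph---verifying that $\rho$ splits as the scalar Lee-form piece plus an orthogonal representation and that both pieces are genuine representations---so that the abstract decomposition $\mg=\mk\ltimes\mmi$ coincides with the specific form $\mk\ltimes_\rho\R^n$ of Proposition \ref{ex:sem}.
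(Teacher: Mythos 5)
Your proposal is correct and follows essentially the same route as the paper's own proof: both decompose $\mg=\mmi^\bot\ltimes\mmi$ via Proposition \ref{pro:iibot} with $\mmi=\ker\nabla^\theta$, apply Proposition \ref{pro:cflatunim} to the compact-type factor $\mk=\mmi^\bot$, and use \eqref{eq:briibot} together with \eqref{eq:weylc1}--\eqref{eq:weylc2} to write the action as $\rho(x)=\theta(x)\Id_\mmi+\beta(x)$ with $\beta(x)\in\so(\mmi)$, matching the construction of Proposition \ref{ex:sem}. Your explicit check that $\beta$ alone is a Lie algebra representation (since $\theta$ vanishes on $\mg'$) is a small detail the paper leaves implicit, not a different argument.
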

\begin{proof} Let $(g,\theta)$ be a conformally flat LCP structure on $\mg$. By Proposition \ref{pro:iibot}, $\mg=\mk\ltimes \mmi$, with $\mmi:=\ker \nabla^\theta$ an abelian ideal  and $\mk:=\mmi^\bot$ of compact type and conformally flat.
Recall that $\theta|_\mmi=0$, so \eqref{eq:briibot} and \eqref{eq:weylc2} imply 
\[
\nabla_x^\theta|_{\mmi}=\ad_x|_\mmi=\beta(x)+\theta(x)\Id_\mmi,\qquad \forall x\in\mmi^\bot,
\] where $\beta(x):=\nabla_x^g|_\mmi\in\so(\mmi)$. Since $\nabla^\theta$ is a representation, the map $\rho:\mk\to\gl(\mmi)$,  $\rho(x):=\beta(x)+\theta(x)\Id_\mmi$ is representation as well, and defines the action in the semidirect product $\mk\ltimes\mmi$.

In addition, $\theta$ vanishes on $\mmi$, so $\theta\in\mk^*\subset ( \mk\ltimes \mmi)^*$. Using Proposition \ref{pro:algLCP}, one can easily check that this 1-form $\theta\in\mk^*$ together with the metric $g|_{\mk}$ define a conformally flat structure on $\mk$. Therefore, by Proposition \ref{pro:cflatunim}, the structure in $(\mk,g|_{\mk})$ is the one given  either in Example \ref{ex:rp} or \ref{ex:su2r}, and thus the one in $(\mg,g)$ is an extension of those by $\rho$ above, as described in Proposition \ref{ex:sem}.
\end{proof}

\section{Proper LCP structures}\label{sec:proper}

Having settled the conformally flat case in the previous section, we now focus on proper LCP structures. Let us first introduce the following important subclass of LCP structures.

\begin{defi}
An LCP structure $(g, \theta, \mmu)$ on a Lie algebra $\mg$ is called {\em adapted} if $\theta|_\mmu=0$.   
\end{defi}
Note that any adapted LCP structure is proper, since $\theta \neq 0$. 

Most of the results in this section only hold for adapted LCP structures. However, in view of Theorem \ref{teo:unim} below, this is not a restrictive assumption. Indeed, we are primarily interested in compact quotients of Riemannian Lie groups carrying LCP structures, and by \cite{Mil76}, the Lie algebra of any Lie group admitting co-compact lattices is unimodular.

\begin{teo}\label{teo:unim}
Every proper LCP structure $(g, \theta, \mmu)$ on a unimodular Lie algebra $\mg$ is adapted.
\end{teo}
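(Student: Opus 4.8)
The plan is to show that the restriction of the LCP data to the subalgebra $\mmu$ endows $\mmu$ with a \emph{conformally flat} LCP structure, and then to reach a contradiction with unimodularity via the Bochner-type identity \eqref{eq:2.2}. Throughout, write $q=\dim\mmu$ and $n=\dim\mg$, and suppose for contradiction that $\theta|_\mmu\neq 0$.

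First I would record that $\mmu$ is a Lie subalgebra (Proposition \ref{pro:algLCP}(1)) and that $\theta|_\mmu$ is a closed $1$-form on $\mmu$, since $\theta$ vanishes on $\mg'\supseteq[\mmu,\mmu]$. The crucial step is to identify the intrinsic Weyl connection $\nabla^{\theta|_\mmu}$ of the metric Lie algebra $(\mmu,g|_\mmu)$ determined by $\theta|_\mmu$ with the restriction of the ambient connection. For $u,v\in\mmu$, Koszul's formula gives $\pr_\mmu(\nabla^g_uv)=\nabla^{g|_\mmu}_uv$, and comparing \eqref{eq:weylc} for $\nabla^\theta$ with the analogous formula for $\nabla^{\theta|_\mmu}$ shows that the $\mmu$-component of $\nabla^\theta_uv$ equals $\nabla^{\theta|_\mmu}_uv$; here the only point to watch is that the dual vector $\theta^\sharp$ must be replaced by its $\mmu$-component $\pr_\mmu\theta^\sharp=(\theta|_\mmu)^\sharp$. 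Since $\mmu$ is $\nabla^\theta$-parallel we have $\nabla^\theta_uv\in\mmu$, so it coincides with its $\mmu$-component, whence $\nabla^{\theta|_\mmu}_uv=\nabla^\theta_uv$ for all $u,v\in\mmu$. Therefore $u\mapsto \nabla^{\theta|_\mmu}_u$ is exactly the restriction to the subalgebra $\mmu$ of the representation $\nabla^\theta\colon\mg\to\gl(\mmu)$ of Proposition \ref{pro:algLCP}(3), hence is itself a representation. As conditions (1) and (2) are vacuous when the flat subspace is the whole algebra, this proves that $(\mmu,g|_\mmu,\theta|_\mmu)$ is a conformally flat LCP structure on $\mmu$.

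Now I would invoke the two relevant prior results. Since $\mg$ is unimodular, Corollary \ref{cor:3.4} gives $H^\mmu=-(n-q)\,\theta|_\mmu$. Applying the Bochner identity \eqref{eq:2.2} of Lemma \ref{lm:cflat} to the conformally flat structure just constructed on $\mmu$ yields
\[
0\le |\nabla^{g|_\mmu}(\theta|_\mmu)|^2=\langle H^\mmu,\theta|_\mmu\rangle=-(n-q)\,|\theta|_\mmu|^2\le 0,
\]
where the last inequality is an equality only when $\theta|_\mmu=0$, precisely because the structure is \emph{proper}, so that $n-q\ge 1>0$. This forces $\theta|_\mmu=0$, contradicting our assumption, and the theorem follows.

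The step I expect to be the main obstacle is the identification $\nabla^{\theta|_\mmu}_uv=\nabla^\theta_uv$ on $\mmu$, i.e. checking that the ambient Weyl connection restricts to the intrinsic Weyl connection of $\mmu$ and not to some connection twisted by the second fundamental form of $\mmu$ or by the $\mmu^\bot$-part of $\theta^\sharp$. Once this is in place, everything else is a direct application of Corollary \ref{cor:3.4} and Lemma \ref{lm:cflat}, together with the sign bookkeeping that makes essential use of the hypothesis that the structure is proper (for a conformally flat structure one has $n=q$ and no contradiction arises, consistently with Proposition \ref{pro:cflatunim}).
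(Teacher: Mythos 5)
Your proposal is correct and follows essentially the same route as the paper's own proof: you restrict the structure to $\mmu$, show that $\nabla^{\theta|_\mmu}$ is exactly the restriction to $\mmu$ of the ambient representation $\nabla^\theta:\mg\to\gl(\mmu)$ (which is precisely how the paper establishes that $(g|_\mmu,\theta|_\mmu)$ is conformally flat, using \eqref{eq:LCP} and the $\nabla^\theta$-parallelism of $\mmu$), and then combine Corollary \ref{cor:3.4} with the Bochner identity \eqref{eq:2.2} to obtain $0\le \bigl|\nabla^{g|_\mmu}(\theta|_\mmu)\bigr|^2=-(n-q)\,|\theta|_\mmu|^2$, contradicting properness. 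The step you flagged as the potential obstacle is handled correctly and matches the paper's computation, so there is no gap.
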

\begin{proof}
Let $(g,\theta,\mmu)$ be a proper LCP structure on $\mg$. Then $\mmu$ is a subalgebra of $\mg$ by Proposition \ref{pro:algLCP}(1) and  $\theta|_\mmu$ is a closed 1-form in $\mmu$. Assume for a contradiction that it is not adapted, i.e. $\theta|_\mmu\neq 0$. We claim that in this case $(g|_{\mmu},\theta|_{\mmu})$ is a conformally flat LCP structure on $\mmu$.

To show this, it suffices to show that Proposition \ref{pro:algLCP} holds for $(\mmu,g|_{\mmu},\theta|_{\mmu},\mmu)$.
Notice that for any $x,y,v\in\mmu$, using \eqref{eq:LCP} for $(g,\theta,\mmu)$ together with the facts that $\mmu$ is a subalgebra, and the metric and the 1-form in $(\mmu,g|_{\mmu},\theta|_{\mmu},\mmu)$ are just restrictions of the elements in $\mg$, we obtain
    \begin{multline*}
    \lela\nabla^\theta_xy,z\rira=\frac12\left(\lela  [x,y],z\rira-\lela[x,z],y\rira-\lela[y,z],x)\right)\\
+\theta(x)\lela y,z\rira+\theta(y)\lela x,z\rira-\theta(z)g(x,y)\\
=\frac12\left(g|_{\mmu}( [x,y],z)-g|_{\mmu}([x,z],y)-g|_{\mmu}([y,z],x)\right)\\
+\theta|_{\mmu}(x)\lela y,z\rira+\theta|_{\mmu}(y)\lela x,z\rira-\theta|_{\mmu}(z)g(x,y)=\lela\nabla^{\theta|_{\mmu}}_xy,z\rira
\end{multline*}
Recalling that $\mmu$ is $\nabla^\theta$-parallel, we get from \eqref{eq:LCP}
\[
\nabla^\theta_xy=\nabla^{\theta|_{\mmu}}_xy,\qquad \forall x,y\in\mmu.
\] This implies that $\nabla^{\theta|_{\mmu}}:\mmu\to \gl(\mmu)$ is nothing but the restriction to $\mmu$ of the representation 
$\nabla^\theta:\mg\to \gl(\mmu)$, so Condition (3) in Proposition \ref{pro:algLCP} holds for $(\mmu,g|_{\mmu},\theta|_{\mmu},\mmu)$. Moreover, Condition (1) and the left-hand-side of Condition (2) in the same proposition are trivially satisfied, whilst the right-hand-side of the latter follows directly from the fact that $(g, \theta,\mmu)$ is an LCP structure on $\mg$, thus showing that $(g|_{\mmu},\theta|_{\mmu})$ is a conformally flat LCP structure on $\mmu$.

By \eqref{eq:trace}, we have $H^\mmu=-(n-q)\theta|_{\mmu}$, where $n=\dim \mg$ and $q=\dim \mmu$. In addition, \eqref{eq:2.2} applied to $(\mmu,g|_{\mmu},\theta|_{\mmu},\mmu)$ gives
  \[|\nabla^{g|_\mmu} \theta|_{\mmu}|^2=\lela H^\mmu,\theta|_\mmu\rira=-(n-q)|\theta|_\mmu|^2,\] 
  which is a contradiction since we assumed $\theta|_\mmu\neq 0$ and $n-q> 0$ because the LCP structure is proper. This shows that $\theta|_\mmu=0$, thus finishing the proof.
\end{proof}

\begin{remark}The previous theorem generalizes \cite[Lemma 5.2]{AdBM24}, where it was shown that any LCP structure on a solvable unimodular Lie algebra is adapted.
\end{remark}

The next example shows that the unimodularity assumption in Theorem \ref{teo:unim} is necessary. 

\begin{ex}Let $\mg=\R b\ltimes\R^n$ be the semidirect product of abelian Lie algebra where the action of $b$ on $\R^n$ is $\ad_b:=\Id_{\R^n}$, and consider the inner product $g$ on $\mg$ obtained by extending the standard metric on $\R^n$ orthogonally, and such that $b$ is of unit norm. Then, taking $\theta:=g(b,\cdot)$ and using \eqref{eq:LCP}, we get that $\nabla_x^\theta=0$ for all $x\in\R^n$ and thus $(g,\theta,\mmu:=\R b)$ is a non-adapted LCP structure on $\mg$ which is not conformally flat either.
Notice that the Lie algebra $\mg$ is not unimodular.
\end{ex}

The following result generalizes Proposition 6.1 in \cite{AdBM24}. For the reader's convenience we include its proof here, even though it is similar to the original one.
\begin{cor}\label{cor:bound}
Every proper LCP structure $(g,\theta,\mmu)$ on an $n$-dimensional unimodular Lie algebra $\mg$ satisfies $\dim \mmu\leq n-2$.
\end{cor}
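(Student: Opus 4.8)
The plan is to combine Theorem \ref{teo:unim} with the trace-form identities of Corollary \ref{cor:3.4}. Since $\mg$ is unimodular and the structure is proper, Theorem \ref{teo:unim} guarantees that it is adapted, i.e.\ $\theta|_\mmu=0$. Writing $q=\dim\mmu$, properness means $q<n$, so a priori only $\dim\mmu^\bot\geq 1$; the whole content of the corollary is to upgrade this to $\dim\mmu^\bot\geq 2$. I would do this by ruling out the case $\dim\mmu^\bot=1$.

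So suppose for contradiction that $\dim\mmu^\bot=1$. By Proposition \ref{pro:algLCP}(1), $\mmu^\bot$ is a Lie subalgebra of $\mg$, and being one-dimensional it is abelian; hence its trace form vanishes, $H^{\mmu^\bot}=0$. The second identity in \eqref{eq:trace} then reads $0=H^{\mmu^\bot}=-q\,\theta|_{\mmu^\bot}$, and since $q\geq 1$ this forces $\theta|_{\mmu^\bot}=0$. Together with the adaptedness condition $\theta|_\mmu=0$ and the orthogonal splitting $\mg=\mmu\oplus\mmu^\bot$, this gives $\theta=0$, contradicting the requirement $\theta\neq 0$ in the definition of an LCP structure. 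Therefore $\dim\mmu^\bot\geq 2$, that is, $\dim\mmu\leq n-2$.

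The argument is short precisely because the two ingredients it relies on do all the work: Theorem \ref{teo:unim} removes the need to track $\theta|_\mmu$, reducing everything to the behaviour of $\theta$ on $\mmu^\bot$, and Corollary \ref{cor:3.4} converts the unimodularity of $\mg$ into an exact proportionality between $H^{\mmu^\bot}$ and $\theta|_{\mmu^\bot}$. The only genuine observation is that a one-dimensional subalgebra is abelian and so has vanishing trace form, which is what makes the codimension-one case impossible. I do not expect a real obstacle here; the one point that must be in place is that $\mmu^\bot$ is indeed a subalgebra, and this is exactly what Proposition \ref{pro:algLCP}(1) supplies.
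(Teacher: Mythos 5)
Your proof is correct, and it reaches the conclusion by a genuinely different (though closely related) route than the paper. The paper rules out $\dim\mmu=n-1$ by computing the trace of $\ad_{\ts}$ directly: from \eqref{eq:xu} and \eqref{eq:weylc2} it derives $\ad_{\ts}|_{\mmu}=|\theta|^2\Id_\mmu+A$ with $A\in\so(\mmu)$, and then unimodularity gives $0=\tr(\ad_{\ts})=\tr(\ad_{\ts}|_\mmu)=(n-1)|\theta|^2$, a contradiction. You instead feed the codimension-one hypothesis into Corollary \ref{cor:3.4}: the one-dimensional subalgebra $\mmu^\bot$ (a subalgebra by Proposition \ref{pro:algLCP}(1)) is abelian, so $H^{\mmu^\bot}=0$, whence \eqref{eq:trace} forces $\theta|_{\mmu^\bot}=0$, and adaptedness (Theorem \ref{teo:unim}) kills $\theta|_\mmu$, giving $\theta=0$. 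The two arguments are cousins — the identity \eqref{eq:trace} is precisely the trace bookkeeping of which the paper's computation is the special case $x=\ts$ — but the load is carried by different results. Your version is shorter given the cited lemmas and has the merit of making the reliance on Theorem \ref{teo:unim} fully explicit; note that the paper's proof also uses adaptedness, only implicitly, both to write $\mg=\R\ts\oplus\mmu$ and to apply \eqref{eq:xu} with $x=\ts\in\mmu^\bot$. What the paper's route buys in exchange is the quantitative identity \eqref{eq:tia}, i.e.\ that $\ad_{\ts}$ acts on $\mmu$ as $|\theta|^2\Id_\mmu$ plus a skew-symmetric part, which is the structural fact reappearing later (cf.\ \eqref{eq:bruubot}--\eqref{eq:traadxu}), and it does not need the observation that a one-dimensional subalgebra has vanishing trace form. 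Every step of your argument checks out: $q=\dim\mmu\ge 1$ since $\mmu\neq 0$, Corollary \ref{cor:3.4} applies because $\mg$ is unimodular, and the orthogonal splitting $\mg=\mmu\oplus\mmu^\bot$ justifies concluding $\theta=0$.
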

\begin{proof} 
    The LCP structure is proper, so $1\le\dim \mmu\leq n-1$, whence $n\geq 2$. Moreover,  \eqref{eq:xu} implies that 
    \begin{equation}\label{eq:tia}
    \ad_{\theta^\sharp}|_{\mmu}=|\theta|^2\Id_\mmu+A 
    \end{equation}
    where $A\in \so(\mmu)$ in view of \eqref{eq:weylc2}.

Assume for a contradiction that $\dim \mmu=n-1$. Then $\mg=\R\theta^\sharp\oplus \mmu$ and therefore
\[
0=\tr(\ad_{\theta^\sharp})=\tr(\ad_{\theta^\sharp}|_\mmu).
\]However, using \eqref{eq:tia} in the last equation we get
\[
0=(n-1)|\theta|^2,
\] contradicting the fact that  $n\geq 2$ and $\theta\neq 0$.
 \end{proof}

\begin{remark}In contrast to the conformally flat case (see Remark \ref{rem:latcf}), there exist unimodular Lie algebras carrying proper LCP structures, that do not admit co-compact lattices. Indeed, this was already shown in \cite{AdBM24} for solvable unimodular Lie algebras. In general, the problem of finding such lattices on unimodular LCP Lie algebras is highly non-trivial. Examples of non-solvable Lie groups carrying co-compact lattices and whose Lie algebras admit LCP structures will be constructed in Section \ref{sec:nonsolv}.
\end{remark}

The rest of the section is devoted to the study of adapted LCP Lie algebras, not necessarily unimodular (even though by Theorem \ref{teo:unim}, every LCP structure on a unimodular Lie algebra is adapted).
We start with a necessary and sufficient condition for a proper LCP structure to be adapted.

\begin{pro}\label{pro:adid} A proper LCP structure $(g,\theta,\mmu)$ on a Lie algebra $\mg$ is adapted if and only if $\mmu$ is an ideal. If this holds, then $\mmu$ is abelian and contained in $\mg'$, thus in particular $\mmu$ is contained in the nilradical of $\mg$.
\end{pro}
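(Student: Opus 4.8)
The plan is to prove the equivalence by reducing both implications to a single statement about the connection, and then to read off the structural consequences. Throughout write $\mg=\mmu\oplus\mmu^\perp$; by Proposition \ref{pro:algLCP} both summands are subalgebras, and since $\mmu$ is $\nabla^\theta$-parallel each $\nabla^\theta_x$ ($x\in\mg$) preserves both $\mmu$ and $\mmu^\perp$. Note that adaptedness, $\theta|_\mmu=0$, is exactly the condition $\theta^\sharp\in\mmu^\perp$. The key preliminary observation is that, since $\nabla^\theta$ is torsion-free, for $x\in\mmu^\perp$ and $u\in\mmu$ one has $[x,u]=\nabla^\theta_x u-\nabla^\theta_u x$, with $\nabla^\theta_x u\in\mmu$ and $\nabla^\theta_u x\in\mmu^\perp$. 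Hence the $\mmu^\perp$-component of $[x,u]$ equals $-\nabla^\theta_u x$, and therefore $\mmu$ is an ideal if and only if $\nabla^\theta_u|_{\mmu^\perp}=0$ for every $u\in\mmu$. Both directions will be deduced from this reformulation.

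For the implication ``ideal $\Rightarrow$ adapted'', I would assume $\nabla^\theta_u|_{\mmu^\perp}=0$, so that $[u,x]\in\mmu$ for all $u\in\mmu$, $x\in\mmu^\perp$, whence $g([u,x],x)=0$ because $x\perp\mmu$. Comparing with the first relation in \eqref{eq:xu} gives $\theta(u)|x|^2=0$; since the structure is proper, $\mmu^\perp\neq0$, so choosing $x\neq0$ forces $\theta(u)=0$, i.e. $\theta|_\mmu=0$. This direction is short and uses only conditions (1)--(2) together with properness.

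The implication ``adapted $\Rightarrow$ ideal'' is the crux, and I expect it to be the main obstacle. Polarizing the first relation in \eqref{eq:xu} shows only that $\nabla^\theta_u|_{\mmu^\perp}\in\so(\mmu^\perp)$ is skew-symmetric, which is not enough to conclude that it vanishes: conditions (1)--(2) are genuinely insufficient here, so condition (3) must be used. Decomposing the representation identity $\nabla^\theta_{[u,w]}|_\mmu=[\nabla^\theta_u|_\mmu,\nabla^\theta_w|_\mmu]$ for $u\in\mmu$, $w\in\mmu^\perp$ along $\mg=\mmu\oplus\mmu^\perp$ and using $[u,w]=\nabla^\theta_u w-\nabla^\theta_w u$ yields the operator relation $\nabla^\theta_{\nabla^\theta_u w}|_\mmu=[\nabla^\theta_u|_\mmu,\nabla^\theta_w|_\mmu]+\nabla^\theta_{\nabla^\theta_w u}|_\mmu$ on $\mmu$. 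Taking the trace over $\mmu$ and using \eqref{eq:weylc2}, which gives $\tr(\nabla^\theta_c|_\mmu)=(\dim\mmu)\,\theta(c)$, kills the commutator, while $\nabla^\theta_w u\in\mmu$ makes the last term's trace vanish by adaptedness; one is left with $\theta(\nabla^\theta_u w)=0$ for all $w$, and by skew-symmetry this already forces $\nabla^\theta_u\theta^\sharp=0$. Upgrading this partial vanishing to the full statement $\nabla^\theta_u|_{\mmu^\perp}=0$ is the delicate point: the plan is to exploit that $w\mapsto\nabla^\theta_w|_\mmu$ is a representation of the subalgebra $\mmu^\perp$ with image of compact type, and to feed the relation above back into itself, so that the subspace $\{w\in\mmu^\perp:\nabla^\theta_u w=0\ \forall u\in\mmu\}$, already known to contain $\theta^\sharp$, is forced to be all of $\mmu^\perp$.

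Finally, granting the equivalence (so that $\mmu$ is an adapted ideal), I would derive the structural consequences. Since $\theta^\sharp\in\mmu^\perp$ and $\mmu$ is an ideal, $\Phi:=\ad_{\theta^\sharp}|_\mmu$ is a derivation of $\mmu$; the second relation in \eqref{eq:xu} with $x=\theta^\sharp$, after polarization, shows that the symmetric part of $\Phi$ is $|\theta|^2\Id_\mmu$, so $\Phi=|\theta|^2\Id_\mmu+A$ with $A\in\so(\mmu)$. Then $\Phi^\top\Phi=|\theta|^4\Id_\mmu+A^\top A$ is positive definite, hence $\Phi$ is invertible and $\mmu=\Phi(\mmu)=[\theta^\sharp,\mmu]\subset\mg'$. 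A classical theorem of Jacobson then gives that a Lie algebra admitting a nonsingular derivation is nilpotent, so $\mmu$ is nilpotent. Moreover adaptedness makes $\nabla^\theta|_\mmu$ coincide with the Levi-Civita connection of $(\mmu,g|_\mmu)$, which is flat by condition (3); by Milnor's description of flat left-invariant metrics \cite{Mil76}, a nilpotent flat metric Lie algebra is abelian, so $\mmu$ is abelian. Being a nilpotent ideal, $\mmu$ is contained in the nilradical of $\mg$, completing the proof.
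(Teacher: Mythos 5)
Your ``ideal $\Rightarrow$ adapted'' direction is correct and coincides with the paper's, and your reformulation ($\mmu$ is an ideal iff $\nabla^\theta_u|_{\mmu^\perp}=0$ for all $u\in\mmu$) is sound. But the crux, ``adapted $\Rightarrow$ ideal'', has a genuine gap: after correctly deriving $\theta(\nabla^\theta_u w)=0$, hence $\nabla^\theta_u\theta^\sharp=0$, you only announce a \emph{plan} to upgrade this to $\nabla^\theta_u|_{\mmu^\perp}=0$, and the plan as stated does not work. Your operator identity $\nabla^\theta_{\nabla^\theta_u w}|_\mmu=[\nabla^\theta_u|_\mmu,\nabla^\theta_w|_\mmu]+\nabla^\theta_{\nabla^\theta_w u}|_\mmu$ constrains the operator $\nabla^\theta_c|_\mmu$ at the point $c=\nabla^\theta_u w$, not the vector $c$ itself; since $c\mapsto\nabla^\theta_c|_\mmu$ can have a large kernel on $\ker\theta\cap\mmu^\perp$ (e.g.\ in the LCP extensions of Proposition \ref{pro:sdcon} with $\beta=0$, where it vanishes identically there), no amount of ``feeding the relation back into itself'' forces $\nabla^\theta_u w=0$. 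Moreover taking the trace, as you do, kills the commutator and with it exactly the information one would need. The missing ingredient is the full Jacobi identity of $\mg$ decomposed along $\mg=\mmu\oplus\R\theta^\sharp\oplus\mmp$: writing $[\theta^\sharp,x]=Ax$, $[\theta^\sharp,u]=u+Bu$ and $[x,u]=B_xu+C_ux$ with $B\in\so(\mmu)$, $C_u\in\so(\mmp)$, the Jacobi identity for $(\theta^\sharp,u,x)$ yields $[A,C_u]=C_u+C_{Bu}$, and then the trace identity $\tr(C_u[A,C_u])=0$, summed over an orthonormal basis of $\mmu$ and combined with the skew-symmetry of $B$, gives $\sum_i\tr(C_{u_i}^2)=0$, hence $C_u=0$. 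The operator $A=\ad_{\theta^\sharp}|_\mmp$, which carries the eigenvalue-shift mechanism behind your ``compact type'' intuition, never enters your relation at all. Incidentally, your assertion that ``conditions (1)--(2) are genuinely insufficient here, so condition (3) must be used'' is contradicted by the paper: its proof of this direction uses only (1), (2), closedness of $\theta$ and the Jacobi identity (which is of course available for free), not condition (3).

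Two smaller points. First, your partial result $\nabla^\theta_u\theta^\sharp=0$, obtained through the representation identity and a trace, follows much more cheaply: $g([u,x],\theta^\sharp)=\theta([u,x])=0$ since $\theta|_{\mg'}=0$, and polarizing \eqref{eq:xu} then gives $[\mmu,\theta^\sharp]\subset\mmu$ directly, which is how the paper proceeds. Second, your derivation of the structural consequences, granting the equivalence, is correct and is a legitimate alternative to the paper's: the paper shows $\mmu$ abelian by a self-contained trace computation from $B[u,v]=[Bu,v]+[u,Bv]+[u,v]$, whereas you get invertibility of $\Phi=\ad_{\theta^\sharp}|_\mmu=|\theta|^2\Id_\mmu+A$ (giving $\mmu=[\theta^\sharp,\mmu]\subset\mg'$, as in the paper), then nilpotency via Jacobson's nonsingular-derivation theorem, then abelianity from flatness of $(\mmu,g|_\mmu)$ (which does follow from condition (3) and $\nabla^\theta|_{\mmu\times\mmu}=\nabla^{g|_\mmu}$) together with Milnor's structure theorem for flat metric Lie algebras. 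That route is valid but imports two external theorems where the paper needs none; in any case it cannot stand until the equivalence itself is actually proved.
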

\begin{proof} Let $(g,\theta,\mmu)$ be a proper LCP structure on $\mg$. We shall first prove the equivalence in the statement. 
    Assume that  $\mmu\neq \mg$ is an ideal and let $x\in\mmu^\bot$ be of unit norm. Then, by \eqref{eq:xu} we get
    \[
0=g([u,x],x)=\theta(u), \qquad \forall \,u\in\mmu, 
    \]showing that the LCP structure is adapted.

For the converse, assume that $(g,\theta,\mmu)$ verifies $\theta|_{\mmu}=0$. Notice that if $(g,\theta,\mmu)$ is an LCP structure on $\mg$, then for any $\lambda>0$, $(\lambda g,\theta,\mmu)$ is LCP as well, and the first one is adapted if and only if the latter is so. Hence, by rescaling the metric $g$, we assume that $|\theta|=1$ without loss of generality.

Consider the orthogonal decomposition $\mg=\mmu\oplus\R\ts\oplus \mmp$, where $\mmu\oplus \mmp$ is an ideal of $\mg$ because it contains $\mg'$ (due to $d \theta=0$). This, together with the fact that $\mmu^\bot=\R\ts\oplus\mmp$ is a subalgebra, by Proposition \ref{pro:algLCP}, implies 
\[[\ts,\mmp]\subset \mmp,\]
 and thus there exists $A\in\gl(\mmp)$ such that
 \begin{equation}
\label{eq:thepp}
[\ts,x]=Ax, \qquad\forall x\in \mmp.
\end{equation}
Moreover, \eqref{eq:xu} gives for any $u\in\mmu$
\[0=g([u,\ts],\ts), \quad \mbox{ and } \quad 
0=g([u,x+\ts],x+\ts)=g([u,\ts],x), \quad \forall\, x\in\mmp,
\] so that $[\ts,\mmu]\subset \mmu$. Also from  \eqref{eq:xu} we get
\begin{equation*}
       g([\ts,u],u)=|u|^2,
\end{equation*} since $|\theta|=1$.
The last two equations together imply
\begin{equation}
     \label{eq:thu}
     [\ts,u]=u+Bu, \mbox{ for some } B\in\so(\mmu).
\end{equation}

Moreover, since $\theta(\mmu\oplus\mmp)=0$, \eqref{eq:xu} gives
\[
g([u,x],x)=0=g([x,u],u), \qquad \forall\, u\in\mmu,\, x\in\mmp,
\] which implies, for every $u\in\mmu$, $x\in \mmp$,
\begin{equation}
     \label{eq:pu}
     [x,u]=B_xu+C_ux, \mbox{ for some } B_x\in\so(\mmu), \,C_u\in\so(\mmp).
\end{equation}

Using \eqref{eq:thepp}, \eqref{eq:thu} and \eqref{eq:pu} in the Jacobi identity for $\ts$, $u\in\mmu$, $x\in \mmp$,
 we get
 \begin{eqnarray*}
     [[\ts,x],u]&=&[\ts,[x,u]]-[x,[\ts,u]]\\
     B_{Ax}u+C_uAx&=& (\Id_\mmu+B)B_xu+AC_ux-B_x(u+Bu)-C_{Bu+u}x\\
     (C_uA-AC_u+C_{Bu+u})x&=& (BB_x-B_xB-B_{Ax})u.
 \end{eqnarray*}
 where the left-hand side is in $\mmp$ whilst the right hand side is in $\mmu$, so both must vanish. In particular, this implies
 \begin{equation}\label{eq:ACu}
     [A,C_u]=C_u+C_{Bu}, \quad \mbox{ for all }u\in \mmu.
 \end{equation}

Let $q\ge 1$ denote the dimension of $\mmu$ and let $\{u_i\}_{i=1}^q$ be an orthonormal basis of $\mmu$. 
Since $\tr(C_u[A,C_u])=0$ for all $u\in\mmu$, by \eqref{eq:ACu} we get
\[
0=\sum_{i=1}^q\tr(C_{u_i}[A,C_{u_i}])=\sum_{i=1}^q\tr(C_{u_i}^2)+\sum_{i=1}^q\tr (C_{u_i}C_{B_{u_i}}).
\]However, the last term vanishes. Indeed, since $B$ is skew-symmetric, 
\[
\sum_{i=1}^q \tr(C_{u_i}C_{Bu_i})=\sum_{i,j=1}^q\lela Bu_i,u_j\rira \tr(C_{u_i}C_{u_j})=0.
\]
We thus conclude $\tr(C_{u_i}^2)=0$ for all $i=1, \ldots, q$, but  $C_{u_i}$ being skew-symmetric, we actually get that they all must vanish. Hence, by \eqref{eq:pu}, $\mmp$ preserves $\mmu$, that is, $[\mmp,\mmu]\subset \mmu$. Using the fact that $\mmu$ is a subalgebra of $\mg$, together with $[\ts,\mmu]\subset \mmu$ which follows from \eqref{eq:thu},  shows that $\mmu$ is an ideal of $\mg$ as claimed.

For the last part, using \eqref{eq:thu} and the Jacobi identity applied to $u,v\in\mmu$ and $\ts$, we get for every $u,v\in\mmu$
\begin{eqnarray*}
 \,   [\ts,[u,v]]&=&[[\ts,u],v]+[u,[\ts,v]],\\
 \, \mathrm{whence}\quad B[u,v]&=&[Bu,v]+[u,Bv]+[u,v].
\end{eqnarray*}Since $B$ is skew-symmetric, this equation gives
\begin{eqnarray*}
       0&=&\sum_{i,j=1}^q\lela B[u_i,u_j],[u_i,u_j]\rira \\
    &=&\sum_{i,j=1}^q\lela [Bu_i,u_j],[u_i,u_j]\rira+\sum_{i,j=1}^q\lela [u_i,Bu_j],[u_i,u_j]\rira+\sum_{i,j=1}^q\lela [u_i,u_j],[u_i,u_j]\rira\\
    &=&\sum_{i,j=1}^q|[u_i,u_j]|^2,
\end{eqnarray*}where we used a change of order in the summation indexes in the second term. Therefore, $\mmu$ is abelian. The fact that $\mmu\subset\mg'$ follows directly from \eqref{eq:thu}, since $\Id_\mmu+B$ is invertible for every $B\in\so(\mmu)$.
\end{proof}

\begin{cor}
Let $(g , \theta,\mmu)$ be  an adapted LCP structure on a Lie algebra $\mg$. For any ideal $\mr\subset\mg$ such that $\mmu\subset\mr$ and $\theta|_{\mr}\neq0$, the triple $(g|_{\mr},\theta|_{\mr},\mmu)$  is an adapted LCP structure on $\mr$.
This holds in particular when $\mr$ is the radical of $\mg$.
\end{cor}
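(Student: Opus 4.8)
The plan is to verify directly the three conditions of Proposition \ref{pro:algLCP} for the quadruple $(\mr,g|_\mr,\theta|_\mr,\mmu)$ and then to read off adaptedness from $\theta|_\mmu=0$. First I record the elementary facts: since $\mr'\subset\mg'$ and $\theta|_{\mg'}=0$, the form $\theta|_\mr$ is closed on $\mr$; it is non-zero by hypothesis; and because $\mmu\subset\mr$, the orthogonal complement of $\mmu$ inside the ideal $\mr$ is exactly $\mmu^\bot\cap\mr$. Adaptedness is then immediate, as $(\theta|_\mr)|_\mmu=\theta|_\mmu=0$ since the ambient structure is adapted; in particular this forces $\mmu\subsetneq\mr$, consistent with $\theta|_\mr\neq0$.

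For Condition (1), recall that the ambient structure being adapted makes $\mmu$ an abelian ideal of $\mg$ by Proposition \ref{pro:adid}, hence a subalgebra contained in $\mr$; and $\mmu^\bot$ is a subalgebra of $\mg$ by Proposition \ref{pro:algLCP}(1), so $\mmu^\bot\cap\mr$, being the intersection of two subalgebras, is again a subalgebra. Condition (2) for the restricted data is the pair of identities \eqref{eq:xu} for $u\in\mmu$ and $x\in\mmu^\bot\cap\mr$; since $\mmu^\bot\cap\mr\subset\mmu^\bot$, these are special instances of Condition (2) for the original LCP structure and hold automatically.

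The only real computation is Condition (3), handled exactly as in the proof of Theorem \ref{teo:unim}. Using \eqref{eq:LCP} and the fact that $\mr$ is a subalgebra, one checks for $x,y,z\in\mr$ that the right-hand side defining $g(\nabla^\theta_xy,z)$ coincides with the one defining $g(\nabla^{\theta|_\mr}_xy,z)$, so that $\nabla^{\theta|_\mr}_xy$ is the orthogonal projection onto $\mr$ of $\nabla^\theta_xy$. Now take $y\in\mmu$: since $\mmu$ is $\nabla^\theta$-parallel we have $\nabla^\theta_xy\in\mmu\subset\mr$, so the projection is trivial and $\nabla^{\theta|_\mr}_xy=\nabla^\theta_xy\in\mmu$ for all $x\in\mr$. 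Thus $\mmu$ is $\nabla^{\theta|_\mr}$-invariant, and the map $x\mapsto\nabla^{\theta|_\mr}_x|_\mmu$ on $\mr$ is nothing but the restriction to the subalgebra $\mr$ of the representation $\nabla^\theta:\mg\to\gl(\mmu)$ from Proposition \ref{pro:algLCP}(3). A restriction of a representation to a subalgebra is again a representation, so Condition (3) holds. With $\mmu\neq0$, Proposition \ref{pro:algLCP} yields that $(g|_\mr,\theta|_\mr,\mmu)$ is an LCP structure on $\mr$, necessarily adapted.

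Finally, for the radical $\mr$ of $\mg$, two points remain: that $\mmu\subset\mr$ and that $\theta|_\mr\neq0$. The inclusion follows from Proposition \ref{pro:adid}, which places $\mmu$ in the nilradical of $\mg$, hence in the radical. The non-vanishing of $\theta$ on the radical is, in my view, the one genuinely non-routine step, and it follows from the closedness of $\theta$ combined with the semisimplicity of the quotient: since $\mg/\mr$ is semisimple it is perfect, whence $\mg'+\mr=\mg$; as $\theta$ vanishes on $\mg'$, if it also vanished on $\mr$ it would vanish on all of $\mg'+\mr=\mg$, contradicting $\theta\neq0$. Therefore $\theta|_\mr\neq0$, and the general statement applies to $\mr$.
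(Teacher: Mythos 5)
Your proof is correct and takes essentially the same route as the paper's: you verify the three conditions of Proposition \ref{pro:algLCP} for $(\mr,g|_\mr,\theta|_\mr,\mmu)$, identify the map $x\mapsto\nabla^{\theta|_\mr}_x|_\mmu$ with the restriction to $\mr$ of the ambient representation $\nabla^\theta:\mg\to\gl(\mmu)$, read off adaptedness from $\theta|_\mmu=0$, and settle the radical case via Proposition \ref{pro:adid} plus the fact that the semisimple part is absorbed into $\mg'$. Your only deviations are cosmetic: you establish $\nabla^{\theta|_\mr}_xu=\nabla^\theta_xu$ by showing $\nabla^{\theta|_\mr}_xy$ is the orthogonal projection onto $\mr$ of $\nabla^\theta_xy$ (the paper tests only against $v\in\mmu$, which suffices once parallelism of $\mmu$ is known), and you phrase $\theta|_\mr\neq0$ via perfectness of the semisimple quotient, i.e.\ $\mg'+\mr=\mg$, where the paper invokes the Levi decomposition with $\ms\subset\mg'$ — equivalent arguments.
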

\begin{proof}
Suppose that  $(g , \theta,\mmu)$ is an adapted LCP structure on $\mg$, so that Proposition \ref{pro:algLCP} holds. Let $\mr$ be an ideal satisfying the hypotheses.

Since $\mr$ is a subalgebra and $\theta|_{\mg'}=0$, it is clear that  $\theta|_{\mr'}=0$ and thus $\theta|_{\mr}$ is closed in $\mr$.
Besides, the orthogonal complement of $\mmu$ in $\mr$ with respect to $g|_{\mr}$ is   $(\mr\cap \mmu^\bot)$. Since $\mmu^\bot$ and $\mr$ are subalgebras, $(\mr\cap \mmu^\bot)$ is a subalgebra as well; hence Proposition \ref{pro:algLCP}(1) holds for $(\mr, g|_{\mr},\theta|_{\mr},\mmu)$. It is clear that also Proposition \ref{pro:algLCP}(2) is valid for all $x\in \mr$, $u\in \mmu$ for the same quadruple. In particular, $\mmu$ is parallel with respect to $\nabla^{\theta|_{\mr}}$.

Finally, note that for any $x\in\mr$, $u,v\in\mmu$, \eqref{eq:LCP} implies $g(\nabla^{\theta|_{\mr}}_xu,v)=g(\nabla^\theta_xu,v)$. Therefore, $\nabla^{\theta|_{\mr}}=\nabla^\theta|_{\mr}$ and thus $\nabla^{\theta|_{\mr}}:\mr\to\gl(\mmu)$ is a Lie algebra representation. By Proposition \ref{pro:algLCP} we obtain that $(g|_{\mr},\theta|_{\mr},\mmu)$ is an LCP structure on $\mr$ as claimed; this structure is adapted since the original one is adapted. 

Finally, notice that if the Levi decomposition of $\mg$ is $\mg=\ms\oplus\mr$, where $\mr$ is the solvable radical and $\ms$ is semisimple, then $\mmu\subset \mr$ by Proposition \ref{pro:adid}. Also, since $\theta|_{\mg'}=0$, $\ms\subset \mg'$ and $\theta\neq 0$, we get $\theta|_{\mr}\neq 0$.
\end{proof}

We are now in position to describe the structure of Lie algebras carrying adapted LCP structures.

\begin{teo}\label{teo:uunimadapt}
Let $(g,\theta,\mmu)$ be an adapted LCP structure on a Lie algebra $\mg$. Then $\mmu$ is an abelian ideal contained in $\mg'$, $\mmu^\bot$ is a Lie subalgebra and the following relations hold for every $ x\in\mg$:
\begin{eqnarray}
    \ad_x|_\mmu&=&\nabla_x^\theta|_{\mmu},\label{eq:bruubot}\\
   \theta(x)&=&\frac1{\dim\mmu}\tr(\ad_x|_\mmu).    \label{eq:traadxu}
\end{eqnarray}
\end{teo}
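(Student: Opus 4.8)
The plan is to derive the structural claims by combining the already-established Proposition \ref{pro:adid} with direct manipulation of the defining formulas \eqref{eq:LCP} and \eqref{eq:xu}. The statements that $\mmu$ is an abelian ideal contained in $\mg'$ follow immediately from Proposition \ref{pro:adid}, since an adapted structure is in particular proper; and that $\mmu^\bot$ is a subalgebra is exactly Proposition \ref{pro:algLCP}(1). So the only genuine content is the pair of identities \eqref{eq:bruubot} and \eqref{eq:traadxu}.

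To prove \eqref{eq:bruubot}, I would split $x$ according to the orthogonal decomposition $\mg=\mmu\oplus\mmu^\bot$ and use linearity in $x$. Note first that both $\ad_x|_\mmu$ and $\nabla_x^\theta|_\mmu$ take values in $\mmu$ --- the former because $\mmu$ is an ideal, the latter because $\mmu$ is $\nabla^\theta$-parallel --- so it suffices to compare $\langle\nabla_x^\theta u,v\rangle$ with $\langle[x,u],v\rangle$ for arbitrary $u,v\in\mmu$. For $x\in\mmu$ the claim is trivial: $\mmu$ abelian gives $\ad_x|_\mmu=0$, while $\langle\nabla_x^\theta u,v\rangle=0$ follows from \eqref{eq:LCP} using $\theta|_\mmu=0$ and $[\mmu,\mmu]=0$. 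For $x\in\mmu^\bot$, I would expand $\langle\nabla_x^\theta u,v\rangle$ via \eqref{eq:LCP}; after discarding the terms containing $\theta(u)$, $\theta(v)$ and $[u,v]$ (all zero), the difference $\langle[x,u],v\rangle-\langle\nabla_x^\theta u,v\rangle$ collapses to $\tfrac12\big(g([x,u],v)+g([x,v],u)\big)-\theta(x)\langle u,v\rangle$, which vanishes by the polarized form of the second equation in \eqref{eq:xu}.

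Finally, \eqref{eq:traadxu} falls out of \eqref{eq:bruubot} together with \eqref{eq:weylc2}: since $\nabla_x^\theta-\theta(x)\Id_\mg$ is skew-symmetric on $\mg$ and preserves $\mmu$, its restriction $\ad_x|_\mmu-\theta(x)\Id_\mmu$ is skew-symmetric on $\mmu$, hence trace-free; taking traces yields $\tr(\ad_x|_\mmu)=\theta(x)\dim\mmu$.

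I do not expect a genuine obstacle: once adaptedness has been converted (through Proposition \ref{pro:adid}) into the facts that $\mmu$ is an abelian ideal and $\theta|_\mmu=0$, the whole argument is bookkeeping with the Koszul-type formula \eqref{eq:LCP}. The one point deserving care is recognizing that, for $x\in\mmu^\bot$, the symmetric part of $\ad_x|_\mmu$ is exactly $\theta(x)\Id_\mmu$; this is precisely the polarization of \eqref{eq:xu}, and it is what makes the factors of $\tfrac12$ in \eqref{eq:LCP} combine correctly to turn the symmetrized bracket into the full bracket.
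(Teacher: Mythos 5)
Your proof is correct and takes essentially the same route as the paper's: the structural claims are delegated to Propositions \ref{pro:algLCP} and \ref{pro:adid}, the identity $\nabla^\theta_x u=[x,u]$ for $x\in\mmu^\bot$, $u\in\mmu$ is extracted from \eqref{eq:LCP} once the symmetric part of $\ad_x|_\mmu$ is identified with $\theta(x)\Id_\mmu$, the case $x\in\mmu$ is checked directly, and \eqref{eq:traadxu} follows by taking traces using \eqref{eq:weylc2}. The only cosmetic difference is that you obtain the symmetric identity $\tfrac12\left(g([x,u],v)+g([x,v],u)\right)=\theta(x)g(u,v)$ by polarizing \eqref{eq:xu}, whereas the paper rederives the same relation from $g(\nabla^\theta_u v,x)=0$ (parallelism of $\mmu$) via \eqref{eq:LCP} --- these are equivalent, since \eqref{eq:xu} is precisely condition (2) of Proposition \ref{pro:algLCP}.
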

Note that by Theorem \ref{teo:unim} this result holds, in particular, for every proper LCP structure provided that $\mg$ is unimodular.
\begin{proof}
    By Proposition \ref{pro:algLCP} we know that $\mmu^\bot$ and $\mmu$ are subalgebras of $\mg$, the latter being also an abelian ideal contained in $\mg'$, by Proposition \ref{pro:adid}. 

    Consider now $u,v\in\mmu$ and $x\in\mmu^\bot$. Using
     \eqref{eq:LCP}, together with the facts that the structure is adapted and $\mmu$ is a subalgebra which is $\mmu$ is $\nabla^\theta$-invariant, we obtain
    \[
    0=\lela\nabla^\theta_uv,x\rira=\frac12\left(\lela[x,u],v\rira+\lela[x,v],u)\right)-\theta(x)g(u,v).
\]
From this equality and \eqref{eq:LCP} again, we get
\begin{equation}
        \lela\nabla^\theta_xu,v\rira=\frac12\left(\lela  [x,u],v\rira-\lela[x,v],u\rira\right)+\theta(x)\lela u,v\rira=\lela  [x,u],v\rira,
    \end{equation}which implies $\nabla_x^\theta u=[x,u]$ for all $x\in \mmu^\bot$, $u\in\mmu$. When $x,u\in\mmu$, on the one hand $[x,u]=0$ because $\mmu$ is abelian. On the other hand, $\lela\nabla_x^\theta y,z\rira=0$ for any $z\in\mmu^\bot$  because $\mmu$ is $\nabla^\theta$-parallel, and \eqref{eq:LCP} implies $\lela\nabla_x^\theta y,z\rira=0$ for all $z\in\mmu$ because $\mmu$ is an abelian ideal and $\theta|_\mmu=0$. Therefore, $[x,u]=\nabla^\theta_xu$ for all $x,u\in\mmu$ as well. This proves \eqref{eq:bruubot}.    

    Finally, \eqref{eq:traadxu} follows directly by taking the trace in \eqref{eq:bruubot} and using \eqref{eq:weylc2}.
\end{proof}

\begin{cor} \label{cor:main} Let $(g,\theta,\mmu)$ be an adapted LCP structure on a Lie algebra $\mg$. Then $\mg$ is isomorphic to a semidirect product $\mg=\mh\ltimes_\alpha \R^q$ where $\mh:=\mmu^\bot$, $q:=\dim(\mmu)$ and $\alpha:\mh\to  \gl(\R^q)$ is a Lie algebra representation. Moreover, there exists a closed 1-form $0\neq \xi\in \mh^*$  and an orthogonal Lie algebra representation $\beta:\mh\to \so(\R^q)$, such that $\alpha(x): =\xi(x)\Id +\beta(x)$, for all $x\in \mh$.
\end{cor}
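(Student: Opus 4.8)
The plan is to read the entire statement off Theorem \ref{teo:uunimadapt} together with the structural facts already proved, so that essentially no new computation is needed. First I would record the semidirect product structure. By Theorem \ref{teo:uunimadapt} (and Proposition \ref{pro:adid}), $\mmu$ is an abelian ideal and $\mh:=\mmu^\bot$ is a Lie subalgebra; since the decomposition $\mg=\mh\oplus\mmu$ is $g$-orthogonal and $\mmu$ is abelian of dimension $q$, fixing a $g$-orthonormal basis of $\mmu$ identifies $(\mmu,g|_\mmu)$ with the standard $\R^q$ and realizes $\mg$ as the semidirect product $\mg=\mh\ltimes_\alpha\R^q$, where $\alpha(x):=\ad_x|_\mmu$ is the adjoint action of the subalgebra $\mh$ on the ideal $\mmu$. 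That $\alpha$ is a Lie algebra representation is automatic (Jacobi identity for a subalgebra acting on an ideal); alternatively it follows from \eqref{eq:bruubot}, which gives $\ad_x|_\mmu=\nabla_x^\theta|_\mmu$, together with Proposition \ref{pro:algLCP}(3).

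For the ``moreover'' part I would set $\xi:=\theta|_\mh$. It is nonzero: the structure is adapted, so $\theta|_\mmu=0$, while $\theta\neq0$ by the definition of an LCP structure, hence $\theta$ cannot also vanish on the complement $\mh$. It is closed in $\mh$ because $\mh'=[\mh,\mh]\subset\mg'$ and $\theta|_{\mg'}=0$, so $\xi|_{\mh'}=0$, which by the Maurer--Cartan criterion means $\xi$ is closed. I would then define $\beta:\mh\to\gl(\R^q)$ by $\beta(x):=\alpha(x)-\xi(x)\Id$, which is exactly the asserted decomposition $\alpha(x)=\xi(x)\Id+\beta(x)$.

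It remains to verify that $\beta$ takes values in $\so(\R^q)$ and is a homomorphism. Skew-symmetry is immediate from \eqref{eq:weylc2}: since $\mmu$ is $\nabla^\theta$-invariant, the endomorphism $\nabla_x^\theta-\theta(x)\Id_\mg\in\so(\mg)$ preserves $\mmu$ and restricts to a skew-symmetric endomorphism of $(\mmu,g|_\mmu)$; using $\ad_x|_\mmu=\nabla_x^\theta|_\mmu$ and $\theta|_\mh=\xi$, this restriction is precisely $\alpha(x)-\xi(x)\Id=\beta(x)$, so $\beta(x)\in\so(\R^q)$. For the homomorphism property I would compute $[\alpha(x),\alpha(y)]=[\beta(x),\beta(y)]$ (the scalar parts $\xi(x)\Id$, $\xi(y)\Id$ commute with everything) and compare with $\alpha([x,y])=\xi([x,y])\Id+\beta([x,y])$; since $\xi$ is closed, $\xi([x,y])=0$, so the representation identity $[\alpha(x),\alpha(y)]=\alpha([x,y])$ collapses to $[\beta(x),\beta(y)]=\beta([x,y])$, as required.

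I do not expect a genuine obstacle, since the corollary is a repackaging of Theorem \ref{teo:uunimadapt}. The only points requiring care are bookkeeping ones: confirming that $\nabla^\theta$-invariance of $\mmu$ allows restricting the skew part of $\nabla_x^\theta$ to $\mmu$, and observing that the closedness of $\xi$ is exactly what kills the trace term $\xi([x,y])$ and thereby turns the conformal representation $\alpha$ into a genuine orthogonal representation $\beta$ twisted by $\xi$.
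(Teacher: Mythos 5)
Your proof is correct and follows essentially the same route as the paper's: both deduce the semidirect product from Theorem \ref{teo:uunimadapt}, set $\xi:=\theta|_\mh$ and $\beta(x):=\alpha(x)-\xi(x)\Id$, use $\theta|_{\mg'}=0$ both for closedness of $\xi$ and for the homomorphism property of $\beta$, and get $\beta(x)\in\so(\R^q)$ from \eqref{eq:bruubot} and \eqref{eq:weylc2}. Your explicit check that $\xi\neq 0$ (adaptedness forces $\theta|_\mmu=0$, so $\theta\neq 0$ cannot vanish on $\mh$) is a small detail the paper leaves implicit, but otherwise the arguments coincide.
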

\begin{proof} By Theorem \ref{teo:uunimadapt}, $\mmu$ is a $q$-dimensional abelian ideal of $\mg$, which we identify with $\R^q$,  and $\mh:=\mmu^\perp$ is a Lie subalgebra, so $\mg$ is isomorphic to a semidirect product $\mg=\mh\ltimes_\alpha \R^q$ for the Lie algebra representation $\alpha:=\nabla^\theta:\mh\to  \gl(\R^q)$. Let us define $\xi:=\theta|_\mh\in\mh^*$ and $\beta(x):=\alpha(x)-\theta(x)\Id_\mmu$ for every $x\in\mh$. Since $\theta$ vanishes on $\mg'$, $\xi$ vanishes on $\mh'$. Moreover, $\beta$ is a Lie algebra representation: for every $x,y\in\mh$ we have
$$[\beta(x),\beta(y)]=[\alpha(x),\alpha(y)]=\alpha([x,y])=\beta([x,y]),$$
(using again the fact that $\theta|_{\mh'}=0$). Finally, for every $x\in\mh$ we have by \eqref{eq:bruubot} that $\beta(x)=\alpha(x)-\theta(x)\Id_\mmu=\nabla^\theta_x-\theta(x)\Id_\mmu$ belongs to $\so(\R^q)$ by \eqref{eq:weylc2}, so $\beta$ is an orthogonal Lie algebra representation. 
\end{proof}

We will now prove the converse of Corollary \ref{cor:main}.

Consider a metric Lie algebra $(\mh,h)$, a non-zero closed 1-form $\xi\in \mh^*$ and an orthogonal Lie algebra representation $\beta:\mh\to \so(\R^q)$ for some $q\ge 1$, where $\R^q$ is endowed with the canonical inner product $\langle\cdot,\cdot\rangle_{\R^q}$. Define $\alpha:\mh\to \gl(\R^q)$ by $\alpha(x)=\xi(x)\Id +\beta(x)$, for all $x\in \mh$, which is a Lie algebra representation as before. 

We then define the semidirect product $\mg:=\mh\ltimes_\alpha \R^q$, endowed with the inner product $g:=h+\langle\cdot,\cdot\rangle_{\R^q}$,  together with the 1-form $\theta$ extending $\xi$ by zero on $\R^q$, and the ideal $\mmu:=\R^q$.

\begin{pro}\label{pro:sdcon}
The triple $(g,\theta,\mmu)$ defines an adapted LCP structure on $\mg$.

Moreover, $\mg$ is solvable if and only if $\mh$ is solvable, and $\mg$ is unimodular if and only if $\xi=-\frac{1}{q}H^\mh$.
\end{pro}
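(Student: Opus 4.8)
The plan is to verify the three claims of Proposition \ref{pro:sdcon} in turn, the first being the main construction and the other two being comparatively routine bookkeeping. For the LCP claim, I would check the three conditions of Proposition \ref{pro:algLCP} for the quadruple $(\mg,g,\theta,\mmu)$ with $\mmu=\R^q$ and $\mmu^\bot=\mh$. First, $\theta$ is closed because it extends the closed form $\xi$ by zero and $\R^q$ is an ideal on which $\theta$ vanishes, so $\theta|_{\mg'}=0$. Condition (1) is immediate: $\mmu=\R^q$ is an abelian ideal (hence a subalgebra) and $\mmu^\bot=\mh$ is a subalgebra by construction of the semidirect product. For Condition (2), I would compute the brackets: for $u\in\R^q$ and $x\in\mh$ we have $[x,u]=\alpha(x)u=\xi(x)u+\beta(x)u$, and since $\theta$ vanishes on $\R^q$ and $\beta(x)\in\so(\R^q)$, the two identities in \eqref{eq:xu} reduce to $g([x,u],u)=\langle\alpha(x)u,u\rangle_{\R^q}=\xi(x)|u|^2=\theta(x)|u|^2$ and $g([u,x],x)=0=\theta(u)|x|^2$, both of which hold.

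The crux, and the step I expect to require the most care, is Condition (3): that $\nabla^\theta:\mg\to\gl(\mmu)$, $x\mapsto\nabla^\theta_x|_\mmu$, is a Lie algebra representation. The natural route is to identify $\nabla^\theta_x|_\mmu$ explicitly with $\alpha(x)$ and then invoke that $\alpha$ is already a representation. Concretely, I would use \eqref{eq:LCP} together with the bracket formulas to show $\nabla^\theta_x u=\alpha(x)u$ for $x\in\mh$, $u\in\R^q$, and $\nabla^\theta_u|_\mmu=0$ for $u\in\R^q$ (the latter because $\R^q$ is abelian and $\theta|_{\R^q}=0$, so all relevant terms in \eqref{eq:LCP} drop out). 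Once $\nabla^\theta_x|_\mmu=\alpha(x)$ on $\mh$ and $\nabla^\theta_u|_\mmu=0$ on $\R^q$, the map $x\mapsto\nabla^\theta_x|_\mmu$ agrees with the representation $\alpha$ precomposed with the projection $\mg\to\mh$ along the ideal $\R^q$, and since $\mh=\mg/\R^q$ as Lie algebras and $\alpha$ is a representation, Condition (3) follows. That the structure is \emph{adapted} is then immediate, since $\theta|_\mmu=\theta|_{\R^q}=0$ by construction.

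For the final two assertions I would argue algebraically. Solvability of $\mg=\mh\ltimes_\alpha\R^q$ is equivalent to solvability of $\mh$: the abelian ideal $\R^q$ is solvable, and $\mg/\R^q\cong\mh$, so $\mg$ is solvable iff $\mh$ is (an extension of a solvable algebra by an abelian ideal is solvable, and the quotient of a solvable algebra is solvable). For the unimodularity criterion, I would compute the trace form $H^\mg$. For $x\in\mh$, the adjoint $\ad_x$ preserves both $\mh$ and $\R^q$, so $\tr(\ad_x)=\tr(\ad_x|_\mh)+\tr(\ad_x|_{\R^q})$; since $\ad_x|_{\R^q}=\alpha(x)=\xi(x)\Id+\beta(x)$ with $\beta(x)$ traceless, this gives $\tr(\ad_x|_{\R^q})=q\,\xi(x)$, whereas $\tr(\ad_x|_\mh)=H^\mh(x)$. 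For $u\in\R^q$, $\ad_u$ maps $\mh$ into $\R^q$ and kills $\R^q$, so $\tr(\ad_u)=0$. Hence $H^\mg$ vanishes identically iff $H^\mh(x)+q\,\xi(x)=0$ for all $x\in\mh$, i.e.\ iff $\xi=-\tfrac1q H^\mh$, as claimed.
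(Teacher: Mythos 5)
Your proof is correct and follows essentially the same route as the paper: you verify the three conditions of Proposition \ref{pro:algLCP}, identify $\nabla^\theta_x|_\mmu$ with $\alpha(x)$ for $x\in\mh$ (and $\nabla^\theta_u|_\mmu=0$ for $u\in\R^q$), and compute the trace form blockwise for the unimodularity criterion, just as the paper does. The only cosmetic difference is the solvability step, where you argue via the extension $0\to\R^q\to\mg\to\mh\to 0$ while the paper uses a derived-series containment; both are routine and equivalent.
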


\begin{proof} Notice that $\theta$ is non-zero since $\xi\ne 0$, and closed since $\theta|_{\mg'}=0$. Indeed, $\mg'\subset\mh'+\mmu$, and $\theta|_{\mh'}=\xi|_{\mh'}=0$, whereas $\theta|_{\mmu}=0$ by definition.

It is easy to check that (1) and (2) in Proposition \ref{pro:algLCP} are satisfied. In addition,  by \eqref{eq:LCP}, one readily obtains that for any $u,v\in\mmu$,   $\lela \nabla_x^\theta u,v\rira=\lela \alpha(x)u, v\rira$ if $x\in\mmu^\bot=\mh$ whilst $\nabla_x^\theta u=0$ if $x\in \mmu$. Hence (3) in Proposition \ref{pro:algLCP} also holds.

For the last statements, note that by construction $\mg'\subset \mh'+\mmu$ and $[\mg',\mg']\subset [\mh',\mh']$. Hence, if $\mh$ is solvable, then $\mg$ is so. Conversely, if $\mg$ is solvable, then $\mh$, since it is a subalgebra of $\mg$, it is solvable as well.

Finally, the fact that $\mmu$ is an abelian ideal implies that $\tr\ad_u=0$ for all $u\in\mmu$. In addition, for $x\in\mh$, 
\[
\tr \ad_x=\tr (\ad_x|_{\mh})+\tr (\ad_x|_{\R^q})=H^\mh(x)+\tr(\xi(x)\Id_\mmu+\beta(x))=H^\mh(x)+q\xi(x).
\] This shows the last assertion.
\end{proof}

The above proposition is a generalization of \cite[Prop. 4.3]{AdBM24}; here we no longer require $\mh$ to be non-unimodular, nor the representation $\beta$ to vanish on $\mh'$ (but we need the weaker assumption $\mh'\subsetneq \mh$ in order to ensure the existence of $\xi$). However, in practice we will always choose $\mh$ non-unimodular and $\xi$ equal to $-\frac{1}{q}H^\mh$, since we are mainly interested in constructing unimodular LCP Lie algebras.

\begin{defi}\label{def:ext}
The unimodular metric Lie algebra $(\mh\ltimes_\alpha\R^q,h+\langle\cdot, \cdot\rangle)$ constructed in Proposition \ref{pro:sdcon} by means of $\xi:=-\frac{1}{q}H^\mh$ will be denoted by 
$\mathcal L(\mh,h,\beta)$. We will refer to it as the {\em LCP extension} of $(\mh,h)$ with respect to $\beta$. 
\end{defi}

By Corollary \ref{cor:main} and Proposition \ref{pro:sdcon} we see that a unimodular metric Lie algebra has an adapted LCP structure if and only if it is an LCP extension of some non-unimodular metric Lie algebra with respect to some orthogonal representation of the latter.

\section{LCP manifolds which are not solvmanifolds}\label{sec:nonsolv}

In the previous paper \cite{AdBM24}, the authors together with A.~Andrada, studied LCP structures on solvable unimodular Lie algebras, with focus on those whose corresponding simply connected Lie groups admit co-compact lattices. This section aims to provide similar examples on non-solvable Lie algebras.

By \cite[Prop. 4.1]{AdBM24}, the direct product of an adapted LCP Lie algebra with an arbitrary metric Lie algebra carries again LCP structures. It is thus trivial to obtain in this way non-solvable LCP Lie algebras whose corresponding simply connected Lie groups admit co-compact lattices, starting from solvable ones. However, this examples are not very interesting from our point of view, so our aim will be to construct {\em indecomposable} non-solvable LCP Lie algebras (i.e. which cannot be written as the direct sum of two proper ideals).

The first example is a Lie algebra having compact simple Levi factor. A more involved example comes next, where the simple Levi factor is $\sl(d,\R)$, thus non-compact.
These two give rise to the first examples, know to us of, of indecomposable compact LCP manifolds which are not solvmanifolds.

\subsection{An example with simple Levi factor of compact type}
 Let $\mk=\R b \ltimes \R^3$ be the semidirect product of abelian Lie algebras, where $b$ acts on $\R^3$ by $-\Id_3$.  Then the linear map $\rho:\so(3)\to \gl( \mk)$, such that for every $A\in\so(3)$, $\rho(A)$ vanishes on $\R b$ and acts as $A$ on $\R^3$, is a Lie algebra representation. Hence $\mh:=\so(3)\ltimes_\rho \mk$ is a Lie algebra. If $h$ is a metric such that $\R b$, $\R^3$ and $\so(3)$ are orthogonal, then $H^\mh=h(-3 b,\cdot)$, showing that $\mh$ is non-unimodular. 

Let $(\mg,g):=\mathcal L(\mh, h,\beta)$ be the LCP extension as in Definition \ref{def:ext}, with $\beta:=0$, and having flat space $\mmu$ of dimension 3. Then $\mg=(\so(3)\ltimes_\rho \mk)\ltimes_\alpha \mmu$ is unimodular and non solvable. 

We claim that $\mg$ is indecomposable. Indeed, assume that $\mg$ can be written as a direct sum of ideals $\mg=\mg_1\oplus\mg_2$. The Levi decomposition of $\mg$ is constructed from the Levi decomposition of $\mg_i$, $i=1,2$, so up to an automorphism of $\mg$, we can assume $\so(3)\subset \mg_1$. Since 
\[[\so(3),\R^3]=\rho(\so(3))(\R^3)=\R^3\]
and $\mg_1$ is an ideal, we have that the factor $\R^3$ inside $\mk$ is contained in $\mg_1$ as well. 

Let now $x=A+\lambda b+ v+u$ be an element of $\mg_2$ with $A \in \so(3)$, $\lambda\in\R$, $v\in\R^3\subset \mk$, $u\in \mmu$. Then, as $[\mg_1,\mg_2]=0$, for any $B\in \so(3)$ we have $0=[x,B]=[A,B]-Bv$, which implies $A=0$ and $v=0$. Similarly, for every $w\in\R^3\subset \mk$ we have $0=[x,w]=-\lambda w$ because $b$ acts as $-\Id_3$ on $\R^3$. Hence, $\lambda=0$ and thus $\mg_2\subset \mmu$. This, together with the fact that $\so(3)\oplus \R^3\subset \mg_1$, implies that there exists $y\in\mg_1$ of the form $y=\gamma b+w$ with $\gamma\ne 0$ and $w\in\mmu$. Therefore, for all $z\in\mg_2$,
\[
0=[y,z]=- \gamma z,
\]
showing that $\mg_2=0$. Thus $\mg$ is indecomposable as Lie algebra.

Notice that the radical $\mr:=\mk\ltimes \mmu$ of $\mg$ is  unimodular and almost abelian, that is, it admits a codimension 1 abelian ideal. 

Let $R$ be the connected and simply connected solvable Lie group with Lie algebra $\mr$.  

\begin{pro}
 The connected and simply connected Lie group with Lie algebra $\mg$ is $G=\SU(2)\ltimes R$ and admits a co-compact lattice $\Gamma$. Therefore, $\Gamma\bs G$ is a compact LCP manifold.
\end{pro}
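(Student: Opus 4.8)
The plan is to first pin down the global structure of $G$ through the Levi decomposition, and then to build a co-compact lattice, using in an essential way that the Levi factor $\SU(2)$ is compact.

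\textbf{Global structure.} The Levi decomposition of $\mg$ reads $\mg=\so(3)\ltimes\mr$ with radical $\mr=\mk\ltimes\mmu$. Since $G$ is simply connected, the global Levi--Malcev theorem gives $G=S\ltimes R$, where $R$ is the simply connected solvable group integrating $\mr$ and $S$ is the (closed, simply connected) Levi subgroup integrating $\so(3)\cong\su(2)$. As $\SU(2)$ is the simply connected group with Lie algebra $\su(2)$, we get $S=\SU(2)$ and hence $G=\SU(2)\ltimes R$. The only feature of $S$ used below is that $G/R\cong\SU(2)$ is compact.

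\textbf{A lattice in $R$.} Next I would realize $R$ as an almost abelian group and use an algebraic unit to produce a lattice. Set $V:=\R^3\subset\mk$; since $\xi=-\tfrac13 H^\mh$ and $H^\mh(b)=\tr\ad_b=-3$, we have $\xi(b)=1$, so $b$ acts on the abelian ideal $W:=V\oplus\mmu\cong\R^6$ by $\ad_b|_W=\diag(-\Id_3,\Id_3)$. Thus $R=\R\ltimes_\phi\R^6$ with $\phi(t)=\exp(t\,\ad_b|_W)=\diag(e^{-t}\Id_3,e^{t}\Id_3)$, which is unimodular (consistent with $\mg$ being unimodular). A co-compact lattice exists as soon as $\phi(t_0)$ is conjugate in $\GL(6,\R)$ to an integral matrix for some $t_0>0$. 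I would take $\lambda:=e^{t_0}$ to be the larger root of $x^2-kx+1$ for an integer $k\ge 3$, so that $\lambda,\lambda^{-1}$ are its two real roots; then $\phi(t_0)$, having eigenvalues $\lambda^{-1}$ and $\lambda$ each of multiplicity three, is $\R$-conjugate to $C^{\oplus 3}\in\SL(6,\Z)$, where $C$ is the companion matrix of $x^2-kx+1$. Choosing $P\in\GL(6,\R)$ with $P^{-1}\phi(t_0)P=C^{\oplus 3}$, the lattice $\Lambda:=P\Z^6$ is $\phi(t_0)$-invariant, and $\Gamma_R:=t_0\Z\ltimes\Lambda$ is a co-compact lattice of $R$.

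\textbf{Promotion to $G$ and conclusion.} Because $R$ is closed in $G$, the subgroup $\Gamma:=\Gamma_R$ is discrete in $G$, and I claim it is already co-compact there. Indeed, let $C\subset R$ be compact with $R=C\,\Gamma_R$. Writing any $g\in G$ uniquely as $g=kr$ with $k\in\SU(2)$ and $r\in R$, and then $r=c\gamma$ with $c\in C$, $\gamma\in\Gamma_R$, yields $g=(kc)\gamma\in(\SU(2)\!\cdot\! C)\,\Gamma_R$; since $\SU(2)\!\cdot\! C$ is compact, $G/\Gamma_R$ is compact. This is precisely where compactness of the Levi factor removes any need for a lattice in the semisimple part. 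As $G$ is unimodular, $\Gamma$ is a co-compact lattice. Finally, the LCP extension endows $\mg$ with an adapted LCP structure $(g,\theta,\mmu)$, which is proper since $\dim\mmu=3<10=\dim\mg$; by \cite[Prop. 2.4]{AdBM24} it descends to a non-flat left-invariant LCP structure on $\Gamma\bs G$, so this compact manifold is an LCP manifold.

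\textbf{Main obstacle.} The step carrying genuine content is the construction of $\Gamma_R$, i.e.\ the number-theoretic choice of $t_0$ making $\phi(t_0)$ integral up to $\GL(6,\R)$-conjugacy; once this is in hand, passing from $R$ to $G$ is essentially automatic because $\SU(2)$ is compact.
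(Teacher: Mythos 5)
Your proof is correct and follows essentially the same route as the paper: the companion matrix of $x^2-kx+1$ you use is exactly the paper's $E_m=\begin{pmatrix}0&-1\\1&m\end{pmatrix}$, so your lattice $t_0\Z\ltimes P\Z^6$ in the almost abelian radical is the paper's Bock-style lattice $t_m\Z\ltimes C\Z^6$. Your explicit promotion argument via compactness of $\SU(2)$ is just an unpacked version of the paper's remark that $\Gamma\bs G$ is diffeomorphic to $\SU(2)\times\Gamma\bs R$.
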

\begin{proof}
Since $\mr$ is almost abelian, we can use Bock's method \cite{Bo16} to construct a co-compact lattice in $R$. The Lie algebra $\mr$ can be written as $\R\ltimes \R^6$, where the action of $\R$ on $\R^6$ is given by the matrix
\[
A:=\diag(1,-1,1,-1,1,-1).
\]
Given $m\in \N$, $m\geq3$, set $t_m:=\ln{ (\frac{m+\sqrt{(m^2-4)}}2})$ so that $e^{t_m}+e^{-t_m}=m$. One can easily 
check that $e^{t_mA}$ is conjugate to the block matrix 
\[
\diag(E_m,E_m,E_m), \mbox{ where } E_m=\begin{pmatrix}
    0&-1\\
    1&m
\end{pmatrix}\in \SL(2,\Z).
\]If $C\in\GL(2,\R)$ satisfies $E_m=C^{-1}e^{t_mA}C$, then $\Gamma:=t_m\Z \ltimes C\Z^6$ is a co-compact lattice in $R$. 

It remains to notice that $\Gamma$ is a discrete subgroup of $G$ (contained in $R$), which is co-compact since $\Gamma\bs G$ is diffeomorphic to $\SU(2)\times \Gamma\bs R$.
\end{proof}

\subsection{An example with simple Levi factor of non-compact type}\label{ex:sld}
For $d\geq 2$, set $n:=d^2$ and consider the representation $\rho:\SL(d,\R)\to  \GL(\R^n)$ given by right multiplication. That is, if $X\in \R^n$ (identified with a $d\times d$ real matrix) then $\rho(M)X=XM$ for all $M\in\SL(d,\R)$. 

Let $A$ be the $2\times 2$ diagonal matrix $A:=\diag(1,-1)$. We can thus define $\tau_1:\SL(d,\R)\times \R\to \GL(\R^{n+1})$ and $\tau_2:\SL(d,\R)\times \R \to \GL(\R^2)$  by 
\[
 \tau_1(M,t)={\rm diag}(\rho(M),\Id_1),\qquad \tau_2(M,t)=e^{tA}, \qquad \forall M\in \SL(d,\R),\; t\in\R,
\]where the first map is block diagonal using the inclusion $\GL(\R^n)\subset \GL(\R^{n+1})$, and $\Id_s$ denotes the $s\times s$ identity matrix. It is straightforward to check that both are Lie group representations, thus giving rise to the representation 
\[
\tau:=\tau_1\otimes \tau_2:\SL(d,\R)\times \R \to \GL(\R^{n+1}\otimes\R^2).
\]

Let $G$ be the semidirect product $G:=(\SL(d,\R)\times \R) \ltimes_\tau(\R^{n+1}\otimes \R^2)$. We claim that the Lie algebra $\mg $ of $G$ has an LCP structure. By construction, $\mg=(\sl(d,\R)\oplus \R b)\ltimes_{d\tau} (\R^{n+1}\otimes \R^2)$, where the Lie algebra representation of $\sl(d,\R)\oplus \R b$ on $\R^{n+1}\otimes \R^2$ is the differential of $\tau$, namely,
\begin{equation}
\label{eq:dtau}
d\tau(N+tb)=\diag(d\rho(N),0)\otimes \Id_2+t \,\Id_{n+1}\otimes A, \qquad \forall N\in\sl(d,\R), \; t\in\R.
\end{equation}

Choose an inner product $g$ on $\mg$ making the factors $\sl(d,\R)$, $\R b$, $\R^{n+1}\otimes \R^2$ orthogonal, $b$ of norm 1 and such that, when restricted to $\R^{n+1}\otimes \R^2$, it is the tensor product of the canonical inner product on each factor. Let $\{e_i\}_{i=1}^{n+1}$ and $\{v_1,v_2\}$ be the canonical bases of $\R^{n+1}$ and $\R^2$, respectively, and define $\mmu:=\R e_{n+1}\otimes v_1$.
 
By \eqref{eq:dtau} we get that for all $N\in\sl(d,\R)$ and $t\in\R$,
\[
d\tau(N+tb )(e_{n+1}\otimes v_1)=0\otimes v_1+t e_{n+1}\otimes Av_1=t e_{n+1}\otimes v_1. 
\] Hence $\mmu$ is an ideal of $\mg$ and $\ad_x|_\mmu=\theta(x)\Id_\mmu$ for all $x\in \mg$, where $\theta:=b^\flat$. In addition, \eqref{eq:dtau} implies that $\mmu^\bot$ is a subalgebra. This shows that $\mg$ is an LCP extension as in Proposition \ref{pro:sdcon}. Namely,  $\mg =\mathcal L(\mmu^\bot,g|_{\mmu^\bot},\beta)$, with $\beta:=0$, and thus it carries an LCP structure. 

\begin{pro}
 The connected and simply connected Lie group with Lie algebra $\mg$ is $\tilde G=(\widetilde{\SL(d,\R)}\times \R) \ltimes_\tau(\R^{n+1}\otimes \R^2))$ and admits a co-compact lattice $\tilde\Gamma$. Therefore, $\tilde\Gamma\bs \tilde G$ is a compact LCP manifold.
\end{pro}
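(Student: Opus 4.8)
The plan has two parts: identifying $\tilde G$, which is routine, and constructing the co-compact lattice $\tilde\Gamma$, which is the real work. For the identification, note that $V:=\R^{n+1}\otimes\R^2$ is an abelian ideal of $\mg$ with quotient $\mg/V\cong\sl(d,\R)\oplus\R b$. The latter integrates to the simply connected group $\widetilde{\SL(d,\R)}\times\R$, so the simply connected group with Lie algebra $\mg$ is the semidirect product $\tilde G=(\widetilde{\SL(d,\R)}\times\R)\ltimes_\tau V$, where the action is pulled back along the universal covering $p\colon\widetilde{\SL(d,\R)}\to\SL(d,\R)$. Since $\tau$ is already defined on $\SL(d,\R)\times\R$, the central subgroup $Z:=\ker p$ acts trivially on $V$; this will be used below.

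The crux, and the main obstacle, is to produce a co-compact lattice $\Gamma_0\subset\SL(d,\R)$ preserving a full lattice of $\R^n=\R^{d^2}$ under the right-multiplication representation $\rho$. The obvious candidate $\SL(d,\Z)$ preserves the integer matrices but fails to be co-compact. Following the suggestion of Y.~Benoist, I would instead fix a division algebra $D$ of degree $d$ over $\Q$ that splits over $\R$, so that $D\otimes_\Q\R\cong M_d(\R)=\R^n$, together with an order $\mathcal O\subset D$. Then $\mathcal O$ is a full lattice of $\R^n$, and the reduced-norm-one group $\Gamma_0:=\{x\in\mathcal O:\operatorname{Nrd}(x)=1\}$ is a lattice in $\SL(d,\R)$ which is co-compact precisely because $D$ is a division algebra (the associated $\Q$-group $\SL_1(D)$ is then $\Q$-anisotropic, so the Godement compactness criterion applies). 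Right multiplication by an element of $\Gamma_0$ maps $\mathcal O$ into itself and has determinant $\operatorname{Nrd}^d=1$, hence preserves the lattice $\mathcal O$. Lifting, $\tilde\Gamma_0:=p^{-1}(\Gamma_0)$ is discrete (as $p$ is a covering) and satisfies $\widetilde{\SL(d,\R)}/\tilde\Gamma_0\cong\SL(d,\R)/\Gamma_0$, so it is a co-compact lattice in $\widetilde{\SL(d,\R)}$.

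With the difficult factor in hand, the remaining ingredients mirror the previous example. For the $\R$-factor I would choose $m\ge 3$ and $t_m:=\ln\bigl(\tfrac{m+\sqrt{m^2-4}}{2}\bigr)$, so that $e^{t_mA}$ is conjugate by some $C\in\GL(2,\R)$ to $E_m=\left(\begin{smallmatrix}0&-1\\1&m\end{smallmatrix}\right)\in\SL(2,\Z)$ and hence preserves $C\Z^2\subset\R^2$. Setting $L:=(\mathcal O\oplus\Z)\otimes C\Z^2\subset(\R^n\oplus\R)\otimes\R^2=V$, one checks directly from the explicit action \eqref{eq:dtau} that $\tau(\Gamma_0\times t_m\Z)$ preserves $L$: the factor $\diag(\rho(M),1)\otimes\Id_2$ preserves $(\mathcal O\oplus\Z)\otimes C\Z^2$ for $M\in\Gamma_0$, while $\Id_{n+1}\otimes e^{t_mA}$ preserves it by the choice of $t_m$. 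Since $\tau$ factors through $\SL(d,\R)\times\R$, the lattice $L$ is in fact $(\tilde\Gamma_0\times t_m\Z)$-invariant, so $\tilde\Gamma:=(\tilde\Gamma_0\times t_m\Z)\ltimes L$ is a discrete subgroup of $\tilde G$.

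Finally, to conclude co-compactness I would exhibit $\tilde\Gamma\bs\tilde G$ as a fiber bundle over the base $(\tilde\Gamma_0\times t_m\Z)\bs(\widetilde{\SL(d,\R)}\times\R)$ with fiber the torus $L\bs V$; both base and fiber being compact, $\tilde\Gamma$ is co-compact in $\tilde G$. Since $\mg$ carries a proper LCP structure by the construction preceding the statement, \cite[Prop. 2.4]{AdBM24} shows that it induces a left-invariant LCP structure on the compact quotient $\tilde\Gamma\bs\tilde G$, which is therefore a compact LCP manifold.
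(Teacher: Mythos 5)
Your proposal is correct and follows essentially the same route as the paper: the same division-algebra lattice $\Gamma_0\subset\SL(d,\R)$ (your order-theoretic formulation with the Godement criterion is just a more explicit version of the paper's integer-coefficient basis construction and its references), the same $t_m$, $C$, $E_m$ device for the $\R$-factor, the same invariant lattice in $\R^{n+1}\otimes\R^2$, and the same fiber-bundle argument for co-compactness. The only cosmetic difference is that you lift $\Gamma_0$ to $\widetilde{\SL(d,\R)}$ and rebuild the semidirect product upstairs (correctly noting that $\ker p$ acts trivially on $V$), whereas the paper constructs $\Gamma$ in the linear group $G$ and sets $\tilde\Gamma:=f^{-1}(\Gamma)$ for the covering $f:\tilde G\to G$ --- these yield the same lattice.
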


\begin{proof}
It suffices to show that the connected Lie group $G=(\SL(d,\R)\times \R) \ltimes_\tau(\R^{n+1}\otimes \R^2)$ admits a co-compact  lattice  $\Gamma$. Indeed,  if $f:\tilde{G}\to G$ is the covering map, $\tilde \Gamma:=f^{-1}(\Gamma)$ is a discrete co-compact subgroup of $\tilde{G}$ and the LCP structure defined on $\mg$ can be induced to $\tilde{G}$. We thus get that $\tilde{\Gamma}\bs \tilde{G}$ is a compact LCP manifold.

To construct a co-compact lattice in $G$, let $D$ be a division algebra over $\Q$ of dimension $n=d^2$ such that $D\otimes_\Q\R$ is isomorphic to $\gl(d,\R)$. Set a basis $\mcZ$ of $D$ such that the product in $D$ has integer coefficients. The set $\Gamma_0$ of elements in $D\otimes_\Q\R\simeq \gl(d,\R)$ having integer coefficients in the basis $\{v\otimes 1: \,v\in\mcZ\}$ and of determinant 1, as elements in $\gl(d,\R)$, is a co-compact lattice in $\SL(d,\R)$ (see \cite[Example 2.1.4]{Be09} and \cite[Chapter 2]{Re03}). One can easily check that  $\rho(\gamma_0)(\Z^n)\subset \Z^n$ for all $\gamma_0\in\Gamma_0$, for $\rho:\SL(d,\R)\to \GL(\R^n)$ defined above.

For each $m\geq 3$, take $t_m\in\R$ as in Subsection \ref{ex:sld} so that $Ce^{t_mA}C^{-1}\in \SL(2,\R)$, for some $C\in \GL(2,\R)$.

Note that $\Gamma_0\times t_m\Z$ and $\Z^{n+1}\otimes C\Z^2$ are co-compact lattices in $\SL(d,\R)\times \R $ and $\R^{n+1}\otimes \R^2$, respectively. In addition, for any $\gamma_0\in\Gamma_0$, $r\in\Z$, $\tau(\gamma_0,rt_m)$ preserves $\Z^{n+1}\otimes C\Z^2$. Indeed, given $m_j,p_k\in\Z$, for $j=1, \ldots, n+1$, $k=1,2$ we have
\[
\tau(\gamma_0,t_m \, r)\left((m_1, \ldots,m_{n+1})\otimes C\begin{pmatrix}p_1\\p_2\end{pmatrix}\right)=(\rho(\gamma_0)\begin{pmatrix}
m_1\\ \vdots\\ m_n\end{pmatrix},m_{n+1})\otimes e^{rt_m A}C\begin{pmatrix}p_1\\p_2\end{pmatrix},
\]which is an element in $\Z^{n+1}\otimes C\Z^2$ because $\rho(\gamma_0)(\Z^n)\subset \Z^n$ and $e^{rt_mA}C=CE_m^r$.
Therefore 
\[
\Gamma:=(\Gamma_0\times t_m\Z)\ltimes_\tau (\Z^{n+1}\otimes C\Z^2)
\] is a discrete co-compact subgroup of $G$. In fact, $\Gamma\bs G$ is a fiber bundle over the compact base $(\Gamma_0\times t_m\Z)\bs(\SL(d,\R)\times \R)$ with compact fiber $(\Z^{n+1}\otimes C\Z^2)\bs (\R^{n+1}\otimes \R^2)$.
\end{proof}

We finish the example by giving further structural features of $\mg$ and $G$.

One should notice that the radical of $\mg$ is almost abelian. To make this clear, consider the basis $\mathcal B:=\{e_1\otimes v_1,\ldots,e_n\otimes v_1,e_1\otimes v_2,\ldots,e_n\otimes v_2,e_{n+1}\otimes v_1,e_{n+1}\otimes v_2\}$ of $\R^{n+1}\otimes \R^2$. In this basis, the adjoint maps given by \eqref{eq:dtau} are block-diagonal as follows:
\[
\sigma(N):=\ad_N=\diag(d\rho(N),d\rho(N),0,0), \quad \ad_b=\diag(\Id_n,-\Id_n,1,-1), \quad \forall N\in \sl(d,\R).
\]
In particular, one can write $\mg$ as the double semidirect product 
\begin{equation}
\label{eq:mgsd}
\mg=\sl(d,\R)\ltimes_\sigma (\R b\ltimes_{\ad_b}(\R^{n+1}\otimes \R^2)).
\end{equation} From this decomposition, it is easy to see that $\mr:=\R b\ltimes_{\ad_b}(\R^{n+1}\otimes \R^2)$ is the radical of $\mg$, which is almost abelian. The simply connected Lie group $R$ with Lie algebra $\mr$ is isomorphic $\R\ltimes (\R^{n+1}\otimes \R^2)$. The subgroup $R\cap \Gamma=t_m \Z\ltimes (\Z^{n+1}\otimes \Z^2)$ is a discrete subgroup obtained on $R$ by Bock's method \cite{Bo16}.

Finally, we shall prove that $\mg$ is indecomposable. In fact, assume $\mg=\mg_1\oplus\mg_2$ as a direct sum of ideals. Without loss of generality, we assume that $\sl(d,\R)\subset \mg_1$. If $\R^n$ denotes the first $n$-coordinates in $\R^{n+1}$, then \eqref{eq:dtau} implies 
\[[\sl(d,\R),\R^n\otimes \R^2]=d\tau(\sl(d,\R))(\R^n\otimes \R^2)=\R^n\otimes \R^2,\]
since $\rho$ is the right-multiplication by matrices. Hence $\R^n\otimes \R^2\subset \mg_1$ as well. Since $\mg_2$ is contained in the commutator of $\sl(d,\R)\ltimes (\R^n\otimes \R^2)$ in $\mg$ which is equal to $\R e_{n+1}\otimes \R^2$, 
there exists $y\in\mg_1$ of the form $y=\gamma b+w$ with $\gamma\ne 0$ and $w\in\R e_{n+1}\otimes \R^2$. Therefore, for all $z\in\mg_2$,
\[
0=[y,z]=\gamma Az.
\]
As $A=\diag(1,-1)$ is invertible, this shows that $\mg_2=0$ and thus $\mg$ is indecomposable.

\section{The set of Lee forms of LCP structures} 

In this last section we change our viewpoint and investigate the set of all possible closed non-zero 1-forms on a Lie algebra that can occur as Lee forms of an LCP structure. We start with the following observation.

\begin{lm}\label{lm:conforadapt}
    A unimodular Lie algebra $\mg$ of dimension $n\geq 3$ cannot admit both an adapted LCP structure and a conformally flat LCP structure.
\end{lm}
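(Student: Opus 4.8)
The plan is to combine two results already established: the classification of conformally flat LCP structures on unimodular Lie algebras (Proposition \ref{pro:cflatunim}) and the structural constraints forced by an adapted LCP structure (Proposition \ref{pro:adid}). The point is that the first result pins down $\mg$ up to isomorphism, while the second produces a distinguished ideal whose position inside $\mg$ is incompatible with that isomorphism type.

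First I would invoke Proposition \ref{pro:cflatunim}: since $\mg$ is unimodular and carries a conformally flat LCP structure, it is isomorphic to $\R$, $\R^2$, or $\su(2)\oplus\R$, of dimensions $1$, $2$ and $4$ respectively. The hypothesis $n\geq 3$ rules out the first two, so necessarily $\mg\cong\su(2)\oplus\R$. Next I would assume, for contradiction, that $\mg$ also admits an adapted LCP structure $(g,\theta,\mmu)$. By Proposition \ref{pro:adid}, $\mmu$ is then a non-zero abelian ideal of $\mg$ contained both in the derived algebra $\mg'$ and in the nilradical of $\mg$. For $\mg=\su(2)\oplus\R$ these are easily identified: since $\su(2)$ is simple and the $\R$-summand is central, one has $\mg'=\su(2)$, while the only nonzero ideals of $\mg$ are $\R$, $\su(2)$ and $\mg$, among which the nilpotent ones are $0$ and the central line $\R$; hence the nilradical equals $\R$. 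As $\su(2)\cap\R=0$ in the direct sum, I would conclude that $\mmu$ lies in $\su(2)\cap\R=0$, contradicting $\mmu\neq 0$.

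The argument is short, and the only step requiring genuine care is the identification of the derived algebra and the nilradical of $\su(2)\oplus\R$, for it is precisely the clash between $\mmu\subset\mg'=\su(2)$ and $\mmu\subset\R$ that yields the contradiction. Equivalently, one may bypass the nilradical altogether and argue that a non-zero abelian \emph{ideal} of $\mg$ contained in the simple summand $\su(2)$ cannot exist: the only ideals of $\mg$ lying inside $\su(2)$ are $0$ and $\su(2)$ itself, and the latter is not abelian. I anticipate no further obstacle.
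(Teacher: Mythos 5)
Your proof is correct and takes essentially the same route as the paper: Proposition \ref{pro:cflatunim} pins down $\mg\cong\su(2)\oplus\R$ (dimensions $1,2$ being excluded by $n\geq 3$), and the adapted structure produces a non-zero abelian ideal $\mmu\subset\mg'=\su(2)$, which simplicity of $\su(2)$ forbids. The only cosmetic difference is in the final step: the paper cites Theorem \ref{teo:uunimadapt} and closes the contradiction via the dimension bound $\dim\mmu\leq n-2$ of Corollary \ref{cor:bound}, whereas you cite Proposition \ref{pro:adid} and use abelianness of $\mmu$ (or the nilradical computation), both of which are equally valid.
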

\begin{proof}By Proposition \ref{pro:cflatunim}, if $\mg$ has a conformally flat LCP structure, then $\mg=\su(2)\oplus\R$. Assume that there is an adapted LCP structure on $\su(2)\oplus\R$. By Theorem \ref{teo:uunimadapt}, $\mmu$ is a non-trivial abelian ideal of $\mg$ contained in $\mg'=\su(2)$. However, this is not possible since $\dim\mmu\leq n-2$ by Corollary \ref{cor:bound} and $\su(2)$ is simple.
\end{proof}

\begin{teo}\label{teo:lee} On a unimodular Lie algebra $\mg$ there is a finite number of closed 1-forms which can occur as Lee forms of proper LCP structures.
\end{teo}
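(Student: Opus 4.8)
The plan is to reduce the statement to the trace formula \eqref{eq:traadxu} and then to an elementary observation about the eigenvalues of the adjoint operators, which turns the finiteness into a purely combinatorial count.

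First I would use the two structural results already established. Since $\mg$ is unimodular, Theorem \ref{teo:unim} guarantees that any proper LCP structure $(g,\theta,\mmu)$ is automatically adapted, so Theorem \ref{teo:uunimadapt} applies. Its crucial consequence is that $\mmu$ is an \emph{abelian ideal} of $\mg$ and that the Lee form is given by
\[
\theta(x)=\frac{1}{\dim\mmu}\,\tr(\ad_x|_\mmu),\qquad\forall\,x\in\mg.
\]
In particular $\theta$ is completely determined by the subspace $\mmu$, with no reference to the metric $g$. Hence it suffices to bound the number of distinct $1$-forms $\theta_\mmu:=\tfrac{1}{\dim\mmu}\tr(\ad_{(\cdot)}|_\mmu)$ as $\mmu$ ranges over all abelian ideals of $\mg$.

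Next I would fix an arbitrary $x\in\mg$ and count the values that $\tr(\ad_x|_\mmu)$ can take. Since $\mmu$ is an ideal it is $\ad_x$-invariant; choosing a basis of $\mg$ whose first vectors span $\mmu$ makes $\ad_x$ block upper-triangular, so the characteristic polynomial of $\ad_x|_\mmu$ divides that of $\ad_x$. Consequently the eigenvalues (over $\C$, with multiplicities) of $\ad_x|_\mmu$ form a sub-multiset of size $q:=\dim\mmu$ of the fixed eigenvalue multiset of $\ad_x$ acting on $\mg$. Therefore $\tr(\ad_x|_\mmu)$, being the sum of those $q$ eigenvalues, takes at most $\binom{n}{q}$ distinct values, where $n=\dim\mg$; in particular only finitely many, depending solely on $x$ and $q$.

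Finally I would assemble these local bounds. Fixing a basis $x_1,\dots,x_n$ of $\mg$, any Lee form $\theta$ is determined by the tuple $(\theta(x_1),\dots,\theta(x_n))$, and for fixed $q$ each entry $\theta(x_i)=\tfrac1q\tr(\ad_{x_i}|_\mmu)$ ranges over a finite set by the previous step; hence there are only finitely many admissible $\theta$ for each value of $q$. Since by Corollary \ref{cor:bound} one has $1\le q\le n-2$, the dimension $q$ itself ranges over a finite set, and taking the union over $q$ yields the finiteness of the set of Lee forms. I do not expect a genuine obstacle here: the only point requiring care is the sub-multiset argument for the eigenvalues of $\ad_x|_\mmu$, while the true heart of the matter is the observation that \eqref{eq:traadxu} makes $\theta$ independent of the metric and a function of the abelian ideal $\mmu$ alone, so that even continuous families of such ideals can only produce finitely many Lee forms.
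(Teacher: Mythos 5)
Your proof is correct and follows essentially the same route as the paper: reduce to the adapted case via Theorem \ref{teo:unim}, invoke the trace formula \eqref{eq:traadxu} from Theorem \ref{teo:uunimadapt}, observe that since $\mmu$ is an $\ad_x$-invariant ideal the value $\tr(\ad_x|_\mmu)$ is a sum of $q$ eigenvalues from the fixed spectrum of $\ad_x$, and bound $q$ by Corollary \ref{cor:bound}. The only differences are cosmetic: the paper evaluates $\theta$ on a basis dual to a basis of the annihilator $(\mg')^\circ$ rather than on an arbitrary basis of $\mg$, and you make explicit (via divisibility of characteristic polynomials) the eigenvalue sub-multiset step that the paper leaves implicit.
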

\begin{proof}
    Let $\mg$ be an $n$-dimensional unimodular Lie algebra and assume it admits a proper LCP structure. Then all LCP structures on $\mg$ are adapted by Theorem \ref{teo:unim} and Lemma \ref{lm:conforadapt}. Since the Lee form of an LCP structure is non-zero and vanishes on $\mg'$, we have that the annihilator of $\mg'$,  $(\mg')^\circ\subset \mg^*$, is non-zero. Let $\gamma_1,\ldots, \gamma_s$ be a basis of $(\mg')^\circ$ and let $b_1, \ldots, b_s\in\mg$ be such that $\gamma_i(b_j)=\delta_{ij}$. For each $i=1, \ldots, s$, let $\{\lambda_j^i\}_{j=1}^{k_i}$ the set of eigenvalues of $\ad_{b_i}$ and set $\mathcal E$ the set of all real numbers obtained as the sum of at most $ n$ eigenvalues $\{\lambda_{i}^j:i=1,\ldots, s,\,j=1, \ldots, k_i\}$. Clearly, the set $\mathcal E$ is finite.

    Let $(g,\theta, \mmu)$ be an LCP structure on $\mg$, and denote by $q:=\dim(\mmu)\ge 1$. We can write the Lee form as $\theta=\sum_{i=1}^st_i\gamma_i$ for some $t_i\in\R$. By \eqref{eq:bruubot} and \eqref{eq:weylc2} one also has that for every $i=1, \ldots, s$:
    \[
    \ad_{b_i}|_{\mmu}=\theta(b_i)\Id_{\mmu}+B_i=t_i\Id_{\mmu}+B_i
    \] for some $B_i\in\so(\mmu)$. This implies $t_i=\frac1q\tr( \ad_{b_i}|_{\mmu})=\frac1q r_i$ where $r_i\in\mathcal E$. Therefore, 
    \[\theta\in \left\{\frac1q(\sum_{i=1}^sr_i\gamma_i):0\leq q\leq n-2, r_i\in\mathcal E\right\},\]
which is a finite set.
\end{proof}

In contrast, we have:

\begin{pro}\label{pro:lee} Let $\mg$ be a unimodular Lie algebra admitting a conformally flat LCP structure. Then, every closed non-zero $1$-form on $\mg$ is the Lee form of a conformally flat LCP structure on $\mg$. 
\end{pro}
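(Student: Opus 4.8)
The plan is to reduce immediately to the classification already established and then exhibit, for each admissible Lie algebra and each prescribed closed non-zero $1$-form, a metric realizing it as a Lee form. Since $\mg$ is unimodular and carries a conformally flat LCP structure, Proposition \ref{pro:cflatunim} forces $\mg$ to be isomorphic to $\R$, $\R^2$, or $\su(2)\oplus\R$. It therefore suffices to argue separately in these three cases, showing that an arbitrary closed non-zero $\theta\in\mg^*$ is the Lee form of a conformally flat structure $(g,\theta)$ for an appropriate choice of inner product $g$.

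First I would dispatch the abelian cases $\mg=\R$ and $\mg=\R^2$. Here $\mg'=0$, so every $1$-form is automatically closed, and by Example \ref{ex:rp} \emph{any} inner product $g$ together with the given non-zero $\theta$ already yields a conformally flat LCP structure. Thus no adjustment is needed and every closed non-zero $1$-form occurs as a Lee form.

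The substantive case, and the one where I expect the only real work to lie, is $\mg=\su(2)\oplus\R$. Fix a generator $z$ of the center. A closed $1$-form vanishes on $\mg'=\su(2)$, so a non-zero closed $\theta$ is determined by the single scalar $c:=\theta(z)$, which must then be non-zero. The idea is to invert the parameter dependence in Example \ref{ex:su2r}: the metric $g_{\mu,\lambda}$ built there from parameters $\mu>0$, $\lambda\neq0$ and $x_0\in\su(2)$ has associated Lee form satisfying $\theta|_{\su(2)}=0$ and $\theta(z)=\tfrac{1}{8\mu\lambda}$. To realize the prescribed $c$ it then suffices to solve $\tfrac{1}{8\mu\lambda}=c$, for instance by taking $x_0=0$, $\mu=1$ and $\lambda=\tfrac{1}{8c}$; this is legitimate since $c\neq0$ guarantees $\lambda\neq0$ and $\mu=1>0$. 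Because both the target form and the form produced by the construction vanish on $\su(2)$ and agree on $z$, and since $\mg=\su(2)\oplus\R z$, they coincide as elements of $\mg^*$. Example \ref{ex:su2r} then guarantees that $(g_{\mu,\lambda},\theta)$ is a conformally flat LCP structure, so the prescribed $\theta$ is exactly its Lee form, finishing the proof.
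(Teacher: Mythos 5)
Your proof is correct and takes essentially the same route as the paper: reduce via Proposition \ref{pro:cflatunim} to $\R$, $\R^2$, or $\su(2)\oplus\R$, dispose of the abelian cases through Example \ref{ex:rp}, and in the remaining case realize the prescribed form by choosing $x_0=0$, $\mu=1$, $\lambda=\tfrac{1}{8\theta(z)}$ in Example \ref{ex:su2r}. This is exactly the paper's parameter choice, and your extra check that both forms vanish on $\su(2)$ and agree on $z$ is a correct (if implicit in the paper) verification that they coincide.
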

\begin{proof}By Proposition \ref{pro:cflatunim}, we know that $\mg$ is either $\R^p$, with $p\in\{1,2\}$, or $\mg=\su(2)\oplus\R$. It was shown in  Example \ref{ex:rp} that any non-zero 1-form $\R^p$, for $p=1,2$, defines a conformally flat LCP structure on $\mg$. In the remaining case, let $z$ be a generator of the center of $\mg$, so that $\mg=\su(2)\oplus \R z$. Given a non-zero closed 1-form $\theta\in \mg^*$,  $\theta|_{\su(2)}=0$ and thus $\theta(z)\neq 0$. Set $\lambda:=\frac1{8\theta(z)}$. Then $\theta$ is the Lee form given in Example \ref{ex:su2r} for $\mu=1$, $\lambda$ and $x_0=0\in \su(2)$.
\end{proof}

Our last result is that the flat space $\mmu$ of an adapted LCP structure actually determines the Lee form. More precisely: 

\begin{pro} On a Lie algebra $\mg$, for every subspace $0\neq \mmu\subsetneq \mg$, there exists at most one closed 1-form $\theta$ with the property that there exists some metric $g$ on $\mg$ such that $(g,\theta,\mmu)$ is an adapted LCP structure on $\mg$.
\end{pro}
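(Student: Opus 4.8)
The plan is to show that, for a fixed subspace $\mmu$, the value $\theta(x)$ is forced for every $x\in\mg$ by the Lie-algebraic data alone, independently of the chosen metric $g$. The whole argument rests on formula \eqref{eq:traadxu} of Theorem \ref{teo:uunimadapt}: whenever $(g,\theta,\mmu)$ is an adapted LCP structure, one has
\[
\theta(x)=\frac{1}{\dim\mmu}\tr(\ad_x|_\mmu),\qquad\forall\,x\in\mg.
\]
The crucial observation I would make is that the right-hand side of this identity does not involve $g$ or $\theta$ at all: it is built solely from the Lie bracket of $\mg$ and the subspace $\mmu$.

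First I would address the only point that needs care, namely that the expression $\tr(\ad_x|_\mmu)$ is well defined. For an arbitrary subspace $\mmu$ the endomorphism $\ad_x$ need not preserve $\mmu$, so the restriction $\ad_x|_\mmu$ would be meaningless. However, Proposition \ref{pro:adid} (or equivalently Theorem \ref{teo:uunimadapt}) guarantees that as soon as $(g,\theta,\mmu)$ is an adapted LCP structure, $\mmu$ is automatically an \emph{ideal} of $\mg$; hence $\ad_x(\mmu)\subset\mmu$ for all $x\in\mg$, the restriction $\ad_x|_\mmu\colon\mmu\to\mmu$ is a genuine endomorphism, and its trace is intrinsically attached to the pair $(\mg,\mmu)$.

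With this in place the conclusion is immediate. Suppose $(g_1,\theta_1,\mmu)$ and $(g_2,\theta_2,\mmu)$ are two adapted LCP structures on $\mg$ sharing the same flat subspace $\mmu$, with possibly different metrics and different closed $1$-forms. Applying \eqref{eq:traadxu} to each of them gives, for every $x\in\mg$,
\[
\theta_1(x)=\frac{1}{\dim\mmu}\tr(\ad_x|_\mmu)=\theta_2(x),
\]
since the middle term is the same in both cases. Therefore $\theta_1=\theta_2$, which is exactly the asserted uniqueness. If no adapted LCP structure with flat subspace $\mmu$ exists, the statement holds vacuously. I do not expect any serious obstacle here: the only subtlety is the well-definedness of the trace, which is handled by recalling that $\mmu$ must be an ideal, and once that is noted the result follows directly from \eqref{eq:traadxu}.
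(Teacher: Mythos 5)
Your proof is correct and follows exactly the paper's own argument: both invoke formula \eqref{eq:traadxu} of Theorem \ref{teo:uunimadapt} and observe that its right-hand side depends only on the Lie bracket and $\mmu$, not on $g$ or $\theta$. Your explicit remark that $\mmu$ is an ideal (via Proposition \ref{pro:adid}), so that $\tr(\ad_x|_\mmu)$ is well defined, is a point the paper leaves implicit, and it is a welcome addition.
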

\begin{proof}Let $\mmu$ be a proper subspace of $\mg$ and assume that $(\mg,g,\theta,\mmu)$ is an adapted LCP structure. Then, by Theorem \ref{teo:uunimadapt}, $\theta$ is defined by \eqref{eq:traadxu}. In particular, since this equation is independent of $g$,  the 1-form $\theta$ is the same for any other possible metric on $\mg$ giving rise to an adapted LCP structure with flat space $\mmu$. 
\end{proof}

\bibliographystyle{plain}
\bibliography{biblio}

\begin{thebibliography}{10}

\bibitem{AdBM24}
A.~Andrada, V.~del Barco, and A.~Moroianu.
\newblock Locally conformally product structures on solvmanifolds.
\newblock {\em {Ann. Mat. Pura Appl.}}, 2024.
\newblock doi: 10.1007/s10231-024-01449-9.

\bibitem{BM2016}
F.~Belgun and A.~Moroianu.
\newblock On the irreducibility of locally metric connections.
\newblock {\em J. Reine Angew. Math.}, 714:123--150, 2016.

\bibitem{Be09}
Y.~Benoist.
\newblock Five lectures on lattices in semisimple {Lie} groups.
\newblock In {\em G\'eom\'etries \`a courbure n\'egative ou nulle, groupes
  discrets et rigidit\'es}, pages 117--176. Paris: Soci{\'e}t{\'e}
  Math{\'e}matique de France (SMF), 2009.

\bibitem{Bo16}
Ch. Bock.
\newblock On low-dimensional solvmanifolds.
\newblock {\em Asian J. Math.}, 20(2):199--262, 2016.

\bibitem{Fl24}
B.~Flamencourt.
\newblock {Locally conformally product structures}.
\newblock {Internat. J. Math.}:2450013, 2024.

\bibitem{Ka13}
H.~Kasuya.
\newblock Vaisman metrics on solvmanifolds and {Oeljeklaus}-{Toma} manifolds.
\newblock {\em Bull. Lond. Math. Soc.}, 45(1):15--26, 2013.

\bibitem{Kourganoff}
M.~Kourganoff.
\newblock Similarity structures and de {Rham} decomposition.
\newblock {\em Math. Ann.}, 373:1075--1101, 2019.

\bibitem{Maier98}
S.~Maier.
\newblock Conformally flat {Lie} groups.
\newblock {\em Math. Z.}, 228:155--175, 1998.

\bibitem{MN2015}
V.~Matveev and Y.~Nikolayevsky.
\newblock A counterexample to {Belgun-Moroianu} conjecture.
\newblock {\em C. R. Math. Acad. Sci. Paris}, 353:455--457, 2015.

\bibitem{MN2017}
V.~Matveev and Y.~Nikolayevsky.
\newblock Locally conformally {Berwald} manifolds and compact quotients of
  reducible manifolds by homotheties.
\newblock {\em Ann. Inst. Fourier (Grenoble)}, 67(2):843--862, 2017.

\bibitem{Mil76}
J.~{Milnor}.
\newblock {Curvatures of left invariant metrics on Lie groups.}
\newblock {\em {Adv. Math.}}, 21:293--329, 1976.

\bibitem{MP24}
A.~Moroianu and M.~Pilca.
\newblock Adapted metrics on locally conformally product manifolds.
\newblock {\em {Proc. Amer. Math. Soc.}}, 152:2221--2228, 2024.

\bibitem{OT2005}
K.~Oeljeklaus and M.~Toma.
\newblock Non-{K\"ahler} compact complex manifolds associated to number fields.
\newblock {\em Ann. Inst. Fourier (Grenoble)}, 55(1):161--171, 2005.

\bibitem{PKP09}
Y.-S. Pyo, H.W. Kim, and J.-S. Joon-Sik~Park.
\newblock On {Ricci} curvatures of left invariant metrics on
  {$\mathrm{SU}(2)$}.
\newblock {\em Bull. Korean Math. Soc.}, 46(2):255--261, 2009.

\bibitem{Re03}
I.~Reiner.
\newblock {\em Maximal orders.}, volume~28 of {\em Lond. Math. Soc. Monogr.,
  New Ser.}
\newblock Oxford: Oxford University Press, reprint of the 1975 original
  edition, 2003.

\bibitem{We23}
H.~{Weyl}.
\newblock {\em {Raum, Zeit, Materie. Vorlesungen \"uber allgemeine
  Relativit\"atstheorie.~5.~Aufl.}}
\newblock {Berlin: J.~Springer, VIII u.~338 S.}, 1923.

\end{thebibliography}

\end{document}